\newtheorem{theorem}{Theorem}[section]
\newtheorem{lemma}[theorem]{Lemma}
\newtheorem{coro}{Corollary}[section]
\theoremstyle{definition}
\newtheorem{definition}[theorem]{Definition}
\newtheorem{prop}{Proposition}[section]
\theoremstyle{plain}
\newtheorem{maintheorem}{Theorem}
\theoremstyle{remark}
\newtheorem{remark}{\bf{Remark}}[section]
\numberwithin{equation}{section}
\begin{document}

\title{Non-uniform hyperbolicity of maps on $\mathbb{T}^2$}

\author{Sebasti\'an A. Ramirez}
\address{Facultad de Matem\'aticas, Pontificia Universidad Cat\'olica de Chile, Santiago, Chile}
\email{sramired@uc.cl}
\thanks{The first author was supported by Proyecto FONDECYT Postdoctorado 3240422, ANID, Chile.}

\author{Kendry J. Vivas}
\address{Departamento de Matem\'aticas, Universidad Católica del Norte, Antofagasta, Chile.}
\email{kendry.vivas01@ucn.cl}
%\thanks{The second author was supported by Proyecto FONDECYT Postdoctorado 3220009, ANID, Chile.} 

%    General info
\subjclass[2010]{Primary 37A05, 37D25.}

%\dedicatory{This paper is dedicated to our advisors.}

\keywords{Lyapunov exponents, Non-uniform hyperbolicity, Stable ergodicity}

\begin{abstract}
In this paper we prove that the homotopy class of  non-homothety linear endomorphisms on $\mathbb{T}^2$ with determinant greater than 2 contains a $C^1$ open set of non-uniformly hyperbolic endomorphisms. Furthermore, we prove that the homotopy class of non-hyperbolic elements (having either $1$ or $-1$ as an eigenvalue) whose degree is large enough contains non-uniformly hyperbolic endomorphisms that are also $C^2$ stably ergodic. These results provide partial answers to certain questions posed in \cite{ACS}. 
\end{abstract}

\maketitle

\section{Introduction and statements of the results}\label{statements}

Lyapunov exponents have proved to be in the last years a powerful tool to studying chaos in dynamical systems. 
These quantities give the exponential rate of expansion or contraction of vectors along the orbits of a system. This theory turns in an active research field thanks to the works of Furstenberg, Kesten, Kingman, Ledrappier, Oseledets and others between the sixties and eighties, and it is present, for instance, in the study of random walks on groups and the Shr\"odinger operators. In this work, we study the particular interaction of Lyapunov exponents with smooth dynamics. 

Throughout this paper, we consider smooth conservative endomorphisms $f:\mathbb{T}^2\to\mathbb{T}^2$, i.e., non-invertible mappings preserving the Lebesgue (Haar) measure $\mu$. For a smooth map $f$ and a pair $(\overline{x},u)\in T\mathbb{T}^2$, the \textit{Lyapunov exponent of $f$ at $(\overline{x},u)$} is given by 
\begin{displaymath}
\chi(\overline{x},u)\coloneqq\limsup_{n\to\infty}\frac{1}{n}\log\Vert Df^n(\overline{x})u\Vert. 
\end{displaymath}

According to Oseledet's theorem \cite{O} there is a full area subset $M$ of $\mathbb{T}^2$ such that the above limit exists for every $\overline{x}\in M$ and every $u\neq 0$. Moreover, there is a measurable bundle $E^-$ and measurable functions $\chi^-$ and $\chi^+$ defined on $M$ such that 
\begin{displaymath}
\chi(\overline{x},u)=\lim_{n\to\infty}\frac{1}{n}\log(min(Df^n(\overline{x})))=\chi^-(\overline{x}),\quad\forall u\in E^-(\overline{x}),
\end{displaymath}
where $min(A)=\inf\{\Vert A v\Vert : \Vert v \Vert=1\}$ is the conorm of a linear map $A$, and
\begin{displaymath}
\chi(\overline{x},u)=\lim_{n\to\infty}\frac{1}{n}\log\Vert Df^n(\overline{x})\Vert=\chi^+(\overline{x}),\quad\forall u\in \mathbb{R}^2\setminus E^-(\overline{x}). 
\end{displaymath}
It is easy to check that $\chi^+(\overline{x})\geq \chi^-(\overline{x})$ almost everywhere and  
\begin{displaymath}
    \int [\chi^+(\overline{x})+\chi^-(\overline{x})]d\mu(\overline{x})=\int \log \vert\det Df(\overline{x})\vert d\mu(\overline{x})>0, 
\end{displaymath}
so that $\chi^+(\overline{x})$ is positive almost everywhere. In this way, we recall the definition of non-uniform hyperbolic system $f$ given in \cite{ACS}: 
\begin{definition}
The map $f$ is non-uniformly hyperbolic (NUH for short) if $\chi^-(\overline{x})<0<\chi^+(\overline{x})$ almost everywhere. 
\end{definition}

The notion of NUH was introduced by Y. Pesin in \cite{P} in order to generalize the classical hyperbolic theory. Trivial examples of NUH maps are the Anosov diffeomorphisms. The first example of a NUH map  that is not Anosov was exhibited by A. Katok in \cite{K}. It consist of a ``slowdown'' of a linear Anosov map near the origin by a local perturbation. Moreover, it is shown that any surface supports diffeomorphisms satisfying this property. Later, this result was generalized in \cite{DP}, showing that, in fact, any closed manifold supports this kind of map. More recently, Berger and Carrasco \cite{BC} constructed a volume-preserving partially hyperbolic diffeomorphism in $\mathbb T^4$ whose two-dimensional central direction has the NUH property and has no dominated splitting. Furthermore, this construction is $C^2$-robust among volume-preserving diffeomorphisms.  

The Bochi-Mañe theorem \cite{BM} shows that NUH area-preserving diffeomorphisms on surfaces are fragile in the following sense: unless a diffeomorphism is Anosov, any conservative diffeomorphism $f$ can be approximated in the topology $C^1$ by another diffeomorphism with zero exponents.
Moreover, there exists a generic set of linear cocycles $A:M\to SL(2,\mathbb R)$, which exhibit two distinct behaviors: Either exhibit uniform hyperbolicity or have both Lyapunov exponents equal to zero. 
However, M. Viana and J. Yang in \cite{VY} proved that the above result does not hold in the non-invertible case by exhibiting a $C^0$-open set of $SL(2,\mathbb R)$-cocycles whose Lyapunov exponents are bounded far away from zero. 

Now, it is well known that any map $f:\mathbb{T}^2\to\mathbb{T}^2$ is homotopic to a linear endomorphism induced by an integer matrix $E$ that we denote by the same letter. So, in what follows we consider homotopy classes associated to linear endomorphisms $E$ such that $\vert\det E\vert\geq 2$. These classes of maps consist of non-invertible local diffeomorphisms. 

In \cite{ACS}, M. Andersson, P. Carrasco and R. Saghin showed that there exists a $C^1$-open set of NUH maps that intersects every homotopy class of linear endomorphisms $E$ which are not homotheties whose degree is bigger than 5. In order to enunciate this result, we consider the set $End_{\mu}^1(\mathbb{T}^2)$ of $C^1$ local diffeomorphisms of $\mathbb{T}^2$ preserving the Lebesgue measure $\mu$. For $f\in End_{\mu}^1(\mathbb{T}^2)$, define the number
\begin{displaymath}
    C_{\chi}(f):=\sup_{n\in\mathbb{N}}\inf_{(\overline{x},u)\in T^1\mathbb{T}^2}\frac{1}{n}I(\overline{x},u;f^n),
\end{displaymath}
where
\begin{displaymath}
    I(\overline{x},u;f^n)=\sum_{\overline{y}\in f^{-n}(\overline{x})}\frac{\log\Vert (Df^n(\overline{y}))^{-1}u\Vert }{\vert \det (Df^n(\overline{y}))\vert}.
\end{displaymath}

\begin{remark}
A similar expression for $I(\overline{x},u;f)$ was considered in \cite{M} to define a weak*-convergent sequence to an invariant measure $\mu^-$ for a non-invertible smooth map $f$ called \textit{inverse SRB measure}. Furthermore, this measure is supported on a hyperbolic repellor and it satisfies a Pesin-type formula involving the negative Lyapunov exponents of $\mu^-$.  
\end{remark}

Let consider 
\begin{displaymath}
    \mathcal{U}:=\lbrace f\in End_{\mu}^1(\mathbb{T}^2) : C_{\chi}(f)>0\rbrace.  
\end{displaymath}
By definition of $C_{\chi}(f)$, we see that $\mathcal{U}$ is $C^1$-open.
For $f\in End_{\mu}^1(\mathbb{T}^2)$, denote by $[f]$ the class of $C^1$ smoothly homotopic maps of $f$, i.e., 
\begin{displaymath}
    [f]=\lbrace g:\mathbb{T}^2\to\mathbb{T}^2 : \text{ there is a }C^1\text{ homotopy between }f\text{ and }g\rbrace.
\end{displaymath}
\begin{theorem}[Theorem A in \cite{ACS}]\label{teoACS}
Any $f\in\mathcal{U}$ is non-uniformly hyperbolic. Moreover, if $E=(e_{ij})\in M_{2\times 2}(\mathbb{Z})$ is not a homothety and $\frac{\vert\det E\vert}{gcd (e_{ij})}>4$, then the intersection $[E]\cap \mathcal{U}$ is non-empty, and in fact contains maps that are real analytically homotopic to $E$. 
\end{theorem}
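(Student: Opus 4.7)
I would handle the two assertions separately.

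\smallskip
\emph{Assertion $(1)$.} Since $\chi^{+}>0$ a.e.\ follows from the integral identity in the introduction, only $\chi^{-}<0$ a.e.\ needs proof. Because $f$ preserves $\mu$, the transfer operator $L\varphi(\bar x)=\sum_{\bar y\in f^{-1}(\bar x)}\varphi(\bar y)/|\det Df(\bar y)|$ satisfies $L^{n}\mathbf 1=\mathbf 1$ and is the $L^{2}(\mu)$-adjoint of the Koopman operator, so for every $f$-invariant Borel set $A\subset\mathbb T^{2}$,
\begin{displaymath}
\int_{A}L^{n}\varphi\,d\mu=\int_{A}\varphi\,d\mu.
\end{displaymath}
Observing that $I(\bar x,u;f^{n_{0}})=L^{n_{0}}(\log\|(Df^{n_{0}}(\cdot))^{-1}u\|)(\bar x)$, fix $n_{0}$ and $c>0$ with $I(\bar x,u;f^{n_{0}})\ge cn_{0}$ uniformly in $(\bar x,u)$. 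A straightforward induction in $k$---regrouping the length-$kn_{0}$ preimage sum through the intermediate level $n_{0}(k-1)$ and using $L^{n_{0}}\mathbf 1=\mathbf 1$---upgrades the hypothesis to the pointwise bound $I(\bar x,u;f^{kn_{0}})\ge ckn_{0}$ for every $k$. Integrating this over $A$ and dividing by $kn_{0}$, the limit $k\to\infty$---via Kingman's subadditive ergodic theorem applied to the cocycle $-\log m(Df^{n})$, plus dominated convergence---gives $\int_{A}(-\chi^{-})\,d\mu\ge c\mu(A)$. Taking $A=\{\chi^{-}\ge 0\}$, which is $f$-invariant since $\chi^{-}$ is, forces $\mu(A)=0$ and hence $\chi^{-}<0$ almost everywhere.

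\smallskip
\emph{Assertion $(2)$.} Write $E=dE'$ with $d=\gcd(e_{ij})$ and the entries of $E'$ coprime, so the hypothesis reads $d|\det E'|>4$. Since $E$ is not a homothety, $E'$ has two distinct singular values $\sigma_{1}>\sigma_{2}$ singling out a ``weakly expanded'' direction $v_{0}$. I would seek $f\in [E]$ of the form $f=E\circ\Phi$, with $\Phi:\mathbb T^{2}\to\mathbb T^{2}$ a real analytic area-preserving diffeomorphism isotopic to the identity (for instance the time-one map of the Hamiltonian flow of a small trigonometric polynomial). Then $f$ is real analytic, preserves $\mu$, and is analytically homotopic to $E$. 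The perturbation $\Phi$ is chosen so that $D\Phi$ twists $v_{0}$ into a cone on which $DE$ acts by the dominant singular value $\sigma_{1}$; this produces a $Df^{-1}$-invariant cone field $\mathcal C\subset T\mathbb T^{2}$ and a constant $\lambda>1$ with $\|(Df^{n}(\bar y))^{-1}u\|\ge\lambda^{n}$ for every preimage $\bar y$ whose full backward orbit remains in $\mathcal C$. Splitting $f^{-n}(\bar x)$ into such ``good'' preimages and the remaining ``bad'' ones and using on the bad set the trivial bound $\log\|(Df^{n}(\bar y))^{-1}u\|\ge -n\log\|DE\|_{\infty}$, a uniform lower bound $I(\bar x,u;f^{n})\ge cn$ will follow provided the $1/|\det Df^{n}|$-weight of the bad preimages is suitably controlled; the arithmetic condition $|\det E|/\gcd(e_{ij})>4$ is exactly what makes this counting estimate close.

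\smallskip
\emph{Main obstacle.} The delicate work is the cone-and-counting estimate in Assertion $(2)$: one must simultaneously control how $\Phi$ distorts the cone field $\mathcal C$ and how preimages of $f^{n}$ distribute in the region where the cone condition fails. The constant $4$ is precisely the threshold at which the direct comparison between the ``good'' and ``bad'' contributions closes, and improvements such as those pursued in the present paper require either a sharper backward-cone analysis or a more subtle choice of the analytic perturbation $\Phi$.
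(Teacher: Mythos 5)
Your treatment of the first assertion is correct and is essentially the standard route (it is \cite[Proposition 2.2]{ACS}, which the present paper quotes and uses rather than reproves): the convexity identity of Lemma~\ref{I} upgrades $C_\chi(f)>0$ to the pointwise bound $I(\bar x,u;f^{kn_0})\ge ckn_0$ for all $k$, and integrating over the invariant set $\{\chi^-\ge0\}$, using that the transfer operator is the adjoint of composition by $f$ and that $\frac1n\log\|(Df^n(\bar y))^{-1}u\|\le\frac1n\log\|(Df^n(\bar y))^{-1}\|\to-\chi^-(\bar y)$ boundedly, forces that set to be null.

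The second assertion is where the substance lies, and your sketch has a genuine gap there. You postulate a single $Df^{-1}$-invariant cone field with uniform expansion $\|(Df^n(\bar y))^{-1}u\|\ge\lambda^n$ along every backward branch remaining in it, produced by twisting the weak singular direction of $E'=E/\gcd(e_{ij})$. This cannot be the mechanism. First, in these constructions the backward expansion comes from the size of the shear parameter $t$ in $h_t(x,y)=(x,y+ts(x))$ --- property (NH2) gives $\|(Df_t(\bar y))^{-1}u\|\gtrsim t$ only for $u$ in the vertical cone and $\bar y$ in the good region $\mathcal G$ --- and not from any singular-value gap of $E'$. Second, and more decisively, an invariant cone with uniform backward expansion covering enough branches to close your good/bad dichotomy would produce a dominated splitting, whereas these examples are robustly non-dominated (see the proposition opening Section~3 of this paper). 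What actually happens is that the cone estimates degenerate on the critical region $\mathcal C$ surrounding the turning points of $s$, which contains preimages of every point, and that for a vector already pushed into the horizontal cone only the preimages lying in the half $\mathcal G^{*(u)}$ of the good region with the correct sign of $s'$ return it to the vertical cone ((NH3)--(NH4)), at the cost of a factor of order $t^{-1}$ on the complementary branches. The proof therefore reduces to a weighted count that your proposal never sets up: the $d=\tau_1\tau_2$ preimages are spaced $1/\tau_2$ apart horizontally, so at least $\tau_1\lfloor(\tau_2-1)/2\rfloor$ of them lie in each of $\mathcal G^{\pm}$ and at most $\tau_1$ in $\mathcal C$, and one must check that the resulting average of the contributions $+\bigl(1-\frac1{\tau_2}\bigr)\log t$ (vertical vectors) and $-\bigl(1-\frac1{\tau_2}\lfloor\frac{\tau_2-1}{2}\rfloor\bigr)\log t$ (horizontal vectors) is positive. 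That computation is exactly where the hypothesis $|\det E|/\gcd(e_{ij})=\tau_2>4$ enters; the whole point of the present paper is to redo this count over three backward iterates so as to lower the threshold to $\tau_2\ge3$.
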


The above result shows that, unlike \cite[Theorem A]{BM}, the rigidity phenomenon is not present in the context of endomorphisms. Moreover, the NUH maps constructed in \cite{ACS} have no dominated splitting in a robust way.

In light of Theorem \ref{teoACS}, the following question was posed in \cite{ACS}:

\textbf{Question 1: }Is i true that $\mathcal{U}$ intersects all the homotopy classes of endomorphisms on $\mathbb{T}^2$? 

Recently, V. Janeiro in \cite{J} extends Theorem \ref{teoACS} to homotheties whose degree is bigger than $5^2$ and some small degree cases. 
In this paper, we will prove the following result: 
\begin{maintheorem}\label{mainteo}
If $E\in M_{2\times 2}(\mathbb{Z})$ is a non-homothety with $\vert\det E\vert>2$, then the intersection $ [E]\cap \mathcal{U}$ is non-empty, and in fact contains maps that are real analytically homotopic to $E$. 
\end{maintheorem}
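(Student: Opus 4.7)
The plan is, in the spirit of Theorem~\ref{teoACS}, to exhibit for each admissible $E$ an explicit real-analytic conservative map $f$ in the homotopy class $[E]$ with $C_\chi(f)>0$. Since $\mathcal U$ is $C^1$-open, producing a single such $f$ would deliver both the non-emptiness of $[E]\cap\mathcal U$ and the analytic-homotopy assertion. I would take $f$ of the form $f=E\circ\phi$, where $\phi:\mathbb T^2\to\mathbb T^2$ is a conservative real-analytic diffeomorphism isotopic to the identity, built from high-frequency shears such as
\[
\phi(x,y)=\bigl(x+a\sin(2\pi ky),\; y+b\sin(2\pi k(x+a\sin(2\pi ky)))\bigr),
\]
with parameters $(a,b,k)$ to be tuned to the normal form of $E$. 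Then $f$ is automatically conservative, real-analytic, and analytically homotopic to $E$, so the problem reduces to verifying $C_\chi(f)>0$.

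A preliminary reduction: membership in $\mathcal U$ is invariant under smooth conjugation by a fixed $SL(2,\mathbb Z)$-automorphism of $\mathbb T^2$ (the conjugation changes $Df^n$ only by a bounded factor, which disappears after $\frac{1}{n}\sup_n$), and such a conjugation preserves analytic homotopy. This lets me assume $E$ is in upper-triangular Hermite normal form with entries $(p,q,0,r)$ and $pr=\det E$, reducing the problem to a short list of cases that include exactly those left open by Theorem~\ref{teoACS}, namely the ones satisfying $|\det E|/\gcd(e_{ij})\le 4$.

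Now I would unfold the iterate. Writing $f^n=E^n\circ\Phi_n$ for a conservative real-analytic $\Phi_n$ obtained by conjugating $\phi$ by powers of $E$, the preimages of $\bar x$ under $f^n$ correspond bijectively via $\Phi_n^{-1}$ to the $|\det E|^n$ preimages of $\bar x$ under $E^n$. Since $|\det DE^n|$ is constant and $|\det D\phi|\equiv 1$, one obtains
\[
\tfrac{1}{n}I(\bar x,u;f^n)=\frac{1}{n|\det E|^n}\sum_{\bar z\in E^{-n}(\bar x)}\log\bigl\|D\Phi_n^{-1}(\bar z)\,(DE^n)^{-1}u\bigr\|.
\]
Because $E^n$ is linearly expanding, the preimage sets $E^{-n}(\bar x)$ equidistribute on $\mathbb T^2$ uniformly in $\bar x$ as $n\to\infty$. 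Combined with a norm decomposition separating the $\Phi_n$ factor from $(DE^n)^{-1}u$, this would reduce the desired positivity to an integral inequality of the type
\[
\inf_{v\in T^1\mathbb T^2}\int_{\mathbb T^2}\log\|D\phi^{-1}(\bar z)\,v\|\,d\mu(\bar z)>0,
\]
which can be arranged by choosing the amplitudes $a$ and $b$ sufficiently large relative to the expansion rate of $E$.

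The main obstacle will be controlling $\Phi_n$, that is, the interaction between the shears and the powers of $E$. In the regime of Theorem~\ref{teoACS} ($|\det E|/\gcd>4$) the preimages are numerous enough that a one-step argument (small $n$) suffices; in the new low-degree cases one has fewer preimages and must pass to large $n$, where the conjugates $E^{-j}\phi E^j$ develop higher and higher frequencies in increasingly anisotropic directions dictated by the expanding eigendirection of $E$. Preventing cancellations between these oscillations, maintaining the integral inequality uniformly in $(u,\bar x)\in T^1\mathbb T^2\times\mathbb T^2$ through the iteration, and handling the off-diagonal entry $q\neq 0$ of the Hermite form (where $E$ mixes directions) is where the delicate work lies.
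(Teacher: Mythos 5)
Your proposal correctly diagnoses the obstacle (in the low-degree cases left open by Theorem~\ref{teoACS} there are too few ``good'' preimages per step, so a one-step estimate cannot work), but the reduction you offer in its place has genuine gaps. The central one is the passage from $\frac1n I(\overline{x},u;f^n)$ to the one-step integral $\inf_{v}\int\log\|D\phi^{-1}(\overline{z})v\|\,d\mu(\overline{z})$. The quantity $\|(Df^n(\overline{y}))^{-1}u\|$ is the norm of a vector pushed through a product of matrices along a backward branch; whether the $j$-th factor expands or contracts it depends on which cone the vector occupies at that stage, and this depends on the entire branch. No ``norm decomposition separating the $\Phi_n$ factor from $(DE^n)^{-1}u$'' is available, since $\log\|ABu\|$ does not split and $D\Phi_n^{-1}$ is itself a product of the conjugates $E^{j}D\phi^{-1}(\cdot)E^{-j}$, whose norms and anisotropy are unbounded in $j$; nothing in your argument controls that product. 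Moreover, the equidistribution of $E^{-n}(\overline{x})$ on $\mathbb{T}^2$, on which the reduction rests, is false for matrices the theorem must cover: for $E=\left(\begin{smallmatrix}3&0\\0&1\end{smallmatrix}\right)$ (a non-homothety with $\det E=3>2$, indeed one of the non-hyperbolic cases relevant to Theorem~\ref{main2}) all $3^n$ preimages of $(x,y)$ under $E^n$ lie on the single circle $\mathbb{T}^1\times\{y\}$. Finally, your Hermite-normal-form reduction is harmless but does not shorten the case list in the way you suggest; the paper instead normalizes $E$ via its elementary divisors $(\tau_1,\tau_2)$ using \cite[Proposition 2.3]{ACS}.

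For comparison, the paper keeps the single shear $h_t(x,y)=(x,y+ts(x))$ of \cite{ACS} but redesigns $s$ on a partition of $\mathbb{T}^1$ into $\tau_2+1$ intervals with alternating, geometrically growing slopes, and chooses $t$ so that no point has three consecutive preimages in the critical region (Lemma~\ref{L2}). The missing mechanism in your proposal is then supplied by a fixed \emph{three-step} cone analysis: Lemma~\ref{keylemma1} counts, among the $d^3$ backward branches of length three, how many carry a given unit vector into the contracting vertical cone; the subsequent lemma shows the asymptotic proportion $p(\tau_2)$ of such branches exceeds $\frac12$ for all $\tau_2\ge 3$; and Lemma~\ref{estimadosvh} converts this into the lower bound $J(t)=pI_1(t)+(1-p)I_2(t)>0$ for $I(\cdot;f^3)$, whence $C_\chi(f_t)>0$ by the convexity identity of Lemma~\ref{I}. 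Some substitute for this combinatorial bookkeeping of cone transitions along backward branches is indispensable, and it is exactly the step your proposal defers to ``where the delicate work lies.''
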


In this way, by combining Theorem A with \cite[Theorem B]{J} we obtain a partial answer to Question 1 for the non-homothety case. 

\begin{coro}\label{corolario1}
The set $\mathcal{U}$ intersects every homotopy class of endomorphisms in $\mathbb{T}^2$ associated to non-homotheties $E$ with $\vert\det E\vert> 2$. 
\end{coro}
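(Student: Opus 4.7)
The strategy is to extend the construction of Andersson--Carrasco--Saghin into the low-degree regime not covered by Theorem \ref{teoACS}. Since that theorem handles $|\det E|/\gcd(e_{ij})>4$, only a short list of residual pairs $(|\det E|,\gcd(e_{ij}))$ remains, namely $(3,1)$, $(4,1)$, $(4,2)$, $(8,2)$, $(9,3)$, and $(16,4)$, after noting that any larger gcd forces $|\det E|\ge g^2>4g$. Conjugation by $A\in\mathrm{GL}(2,\mathbb{Z})$ induces a diffeomorphism of $\mathbb{T}^2$ that preserves area, sends $[E]$ to $[AEA^{-1}]$, and leaves $C_\chi$ invariant; combined with the non-homothety hypothesis this lets me replace each $E$ by a convenient representative (for instance upper-triangular) in its conjugacy class.

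For each such representative I will construct a real-analytic area-preserving perturbation $\Phi:\mathbb{T}^2\to\mathbb{T}^2$, close to the identity, obtained as the time-one map of a divergence-free trigonometric vector field adapted to the eigenstructure of $E$, and set $f=\Phi\circ E$. This places $f$ in $\mathrm{End}_\mu^1(\mathbb{T}^2)$ and in the real-analytic homotopy class of $E$. The role of $\Phi$ is to tilt the $|\det E|$ inverse branches of $f$ away from any common line in $T^1\mathbb{T}^2$, so that after iteration their derivatives spread effectively over $\mathbb{RP}^1$ and no single vector $u$ can be contracted by all branches simultaneously.

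To verify $f\in\mathcal{U}$ I plan to bound $I(\bar{x},u;f^n)/n$ from below using the basic identity $\sum_{y\in f^{-n}(\bar{x})}|\det Df^n(y)|^{-1}=1$, which presents this quantity as a probability-weighted average of $(1/n)\log\|(Df^n(y))^{-1}u\|$ over the $|\det E|^n$ inverse branches. A recursive estimate, expressing $I(\bar{x},u;f^{n+1})$ in terms of averages of $I(\Phi\circ E(\bar{x}),\cdot;f^n)$ plus a one-step contribution from $\Phi$, should give uniform positivity once the fraction of projective directions on which the inverse branches act as contractions shrinks geometrically with $n$.

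The main obstacle is the case $|\det E|=3$: with only three inverse branches, cancellations in the weighted average are delicate and a direct transcription of the ACS estimates does not close. Handling it will require both passing to a sufficiently high iterate and exploiting the non-homothety hypothesis quantitatively, through a mixing-type estimate for the induced action of the inverse branches on $\mathbb{RP}^1$ that prevents a common invariant contracting direction from persisting. This mixing step, together with the verification that the perturbation $\Phi$ can be chosen to make the estimate uniform in $(\bar{x},u)\in T^1\mathbb{T}^2$, is where the bulk of the new analytic work is expected to lie.
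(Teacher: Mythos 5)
In the paper this corollary is a one-line consequence of Theorem \ref{mainteo} (combined with \cite[Theorem B]{J}), so what you are really attempting is the main theorem itself, and your attempt has a genuine gap: the decisive estimates are not supplied, only announced. You correctly reduce to the residual elementary-divisor pairs and correctly note that $\sum_{\overline{y}\in f^{-n}(\overline{x})}|\det Df^n(\overline{y})|^{-1}=1$, so that $\tfrac1n I(\overline{x},u;f^n)$ is a weighted average over inverse branches. But the entire content of the result lies in making that average uniformly positive, and you defer exactly this ("a mixing-type estimate for the induced action of the inverse branches on $\mathbb{RP}^1$ \dots is where the bulk of the new analytic work is expected to lie"). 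The paper closes this gap with concrete new ingredients that your sketch does not contain: a sawtooth-type analytic $s$ with $\tau_2+1$ branches of alternating sign and geometrically growing slopes; a choice of the shear parameter guaranteeing $\mathcal{C}\cap f_t^{-1}(\mathcal{C})\cap f_t^{-2}(\mathcal{C})=\emptyset$, i.e.\ no point has three consecutive preimages in the critical region; an explicit count of cone-field returns over \emph{three} iterates (Lemma \ref{keylemma1}) giving an asymptotic proportion $p(\tau_2)>\tfrac12$ of vertical vectors; and an inductive verification that $p\,I_1(t)+(1-p)I_2(t)>0$ for all $\tau_2\ge 3$. Nothing in your proposal substitutes for this combinatorial/quantitative core, and in particular nothing addresses why $|\det E|=3$ closes.

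A second, more pointed problem is your choice of perturbation. You take $f=\Phi\circ E$ with $\Phi$ \emph{close to the identity}. The mechanism that makes $C_\chi>0$ here (and in \cite{ACS}) is a shear $h_t(x,y)=(x,y+ts(x))$ with $t\to\infty$: every lower bound in the paper has the form $(\text{coefficient})\cdot\log t + C(t)$, and positivity is won by taking $t$ large. For a non-hyperbolic residual matrix (e.g.\ determinant $3$ with eigenvalue $1$) one has $C_\chi(E)\le 0$, since the eigenvector for the eigenvalue $1$ is not expanded by any inverse branch of $E^n$; a perturbation near the identity offers no mechanism to convert this into a uniformly positive average, and your proposal gives none. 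So the construction as described would not establish $f\in\mathcal{U}$ even granting the unproved "mixing" step.
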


It should be noted that Theorem A is equivalent to \cite[Theorem B]{J}, but our result includes some cases that the author of the mentioned reference did not consider. Besides, by taking a look on the proof of \cite[Theorem B]{ACS} we observe that this result can be extended to the cases studied here. 

Now, following an analogous definition  in \cite{Oba2018}, we say that an endomorphism $f$ is $C^2$-\textit{stably ergodic} for $\mu$ if there is a neighborhood $\mathcal{U'}$ of $f$ in $End_{\mu}^2(\mathbb{T}^2)$ such that every $g \in \mathcal{U'}$ is also ergodic. It should be noted that in this definition we use a $C^2$-neighborhood instead of a $C^1$-neighborhood since the argument depends on the control of the bounds of the $C^2$-norms in a neighborhood of $f$.  

The theory of stably ergodicity was initiated in the pioneering work of M. Grayson, C. Pugh, and M. Shub \cite{MPS}, and exhibit the time-one map of the geodesic flow on the unit tangent bundle of a surface with negative constant curvature as a first example. This theory has been widely studied in the context of diffeomorphisms. Indeed, in \cite{PS} was showed that $C^2$ volume-preserving partially hyperbolic diffeomorphisms satisfying certain conditions (stable dynamical coherence and stable accessibility) are stably ergodic. In \cite{BW}, a characterization of stable ergodicity and the denseness of this property were obtained for skew products. Examples of stably ergodic diffeomorphisms which are not partially hyperbolic were given in \cite{T} and \cite{NORH} respectively. In the context of endomorphsisms, the following result was proved in \cite{ACS}: 
\begin{theorem}\label{teoCACS}
For any linear endomorphisms $E$ as in Theorem \ref{teoACS}, if $\pm1$ is not an eigenvalue of $E$, then $[E]\cap\mathcal{U}$ contains $C^1$ stably ergodic endomorphisms, i.e., there is a neighborhood $\mathcal{U'}$ of $f$ in $End_{\mu}^1(\mathbb{T}^2)$ so that every $g \in \mathcal{U'}$ of class $C^2$ is also ergodic.
\end{theorem}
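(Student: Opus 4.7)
The plan is to exhibit a $C^2$ element $f\in [E]\cap\mathcal{U}$ that is ergodic and to show that ergodicity persists for every $C^2$ conservative map in a $C^1$-neighborhood of $f$. Since $\mathcal{U}$ is already $C^1$-open and every $f\in\mathcal{U}$ is NUH by Theorem~\ref{teoACS}, the main task is ergodicity; the overall strategy is a Hopf-type argument suitably adapted to the non-invertible NUH setting.

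First, I would invoke Pesin theory for $C^2$ NUH endomorphisms to obtain, at a.e.\ $\bar x$, a local stable manifold $W^s_{\mathrm{loc}}(\bar x)$ tangent to the Oseledets subspace $E^-(\bar x)$, and to establish absolute continuity of the resulting measurable lamination. The non-invertible case demands care: global unstable manifolds do not make sense, and one has to replace them by weighted sums over preimages, mirroring the quantity $I(\bar x,u;f^n)$ appearing in the very definition of $\mathcal{U}$. The Hopf argument then asserts that any $f$-invariant $L^1$ function has Birkhoff averages essentially constant along stable leaves and along the appropriate preimage-weighted ``unstable'' classes, so that absolute continuity forces it to be a.e.\ constant.

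The hypothesis that $\pm 1$ is not an eigenvalue of $E$ plays two roles. Algebraically, $E-I$ and $E+I$ are invertible integer matrices, which rules out rational invariant subspaces in the homotopy class $[E]$ and so removes any topological obstruction to transitivity of maps in $[E]$ (families of invariant circles and the like). Analytically, it provides the transversality needed between the stable Pesin lamination and the preimage-defined ``unstable'' structure so that Lebesgue-a.e.\ pair of points is linked through a chain of stable--unstable pieces. Stability in $C^1$ then follows from $C^1$-openness of $\mathcal{U}$ combined with uniform control of distortion and absolute-continuity constants under $C^2$ perturbations, which is precisely why the neighborhood is open in the $C^1$-topology but the conclusion is restricted to $C^2$ maps.

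The principal obstacle is the extension of Pesin theory and the Hopf argument to non-invertible local diffeomorphisms: managing the proliferation of preimages, choosing the correct notion of backward accessibility, and maintaining the required absolute-continuity and transversality estimates uniformly in a $C^1$-neighborhood (at $C^2$ regularity) are the delicate steps. Once these ingredients are in place, deducing ergodicity and its stability reduces to a relatively direct application of the Hopf machinery, with the eigenvalue hypothesis supplying exactly the rigidity needed on the homotopy level to prevent degenerate invariant structures.
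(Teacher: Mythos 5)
Your outline captures the first half of the argument — the Hopf-type argument adapted to non-invertible NUH maps via Pesin (un)stable manifolds on the natural extension and the absolute continuity of the stable and unstable laminations — and this is indeed what yields that the ergodic components of $\mu$ are open modulo zero sets. But there is a genuine gap in the second half: openness mod zero of the ergodic components does not by itself give ergodicity; you still need to rule out the possibility that $\mathbb{T}^2$ splits into two (or more) disjoint open-mod-zero invariant pieces. The proof in \cite{ACS} closes this gap with a specific external input, namely Andersson's transitivity criterion (\cite[Theorem 2.2]{A}): if a linear endomorphism of $\mathbb{T}^2$ of degree at least two has no real eigenvalues of modulus one, then \emph{every} area-preserving endomorphism in its homotopy class is transitive. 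For an integer $2\times 2$ matrix, a real eigenvalue of modulus one must be $\pm 1$, so the hypothesis of the theorem is exactly what makes this criterion applicable, and transitivity plus open-mod-zero ergodic components forces a single component. Your paragraph asserting that invertibility of $E\mp I$ ``removes any topological obstruction to transitivity'' gestures at the right phenomenon but is not a proof of transitivity, and without that step the argument does not conclude.

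A secondary inaccuracy: the eigenvalue hypothesis does \emph{not} supply the ``transversality between the stable lamination and the preimage-defined unstable structure.'' That intersection property comes from the cone-field construction of the maps in $\mathcal{U}$: a.e.\ stable manifold contains a $v$-segment crossing the torus vertically, while suitable iterates of local unstable curves cross it horizontally, so they must intersect. This holds regardless of whether $\pm 1$ is an eigenvalue (indeed, Theorem~B of the present paper runs the same geometric argument precisely in the case where $\pm1$ \emph{is} an eigenvalue, replacing the missing transitivity criterion by a quantitative construction of long unstable curves). So the eigenvalue hypothesis enters only through the transitivity step, not through the geometry of the laminations.
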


So, according to above result the following question was posed in \cite{ACS}:

\textbf{Question 2:} Are there stably ergodic non-uniformly hyperbolic endomorphisms in every homotopy class of endomorphisms on $\mathbb{T}^2$?

It should be noted that the argument of Theorem \ref{teoCACS} relies on the classical Hopf argument and a result due to M. Andersson regarding the transitivity of area-preserving endomorphisms on $\mathbb{T}^2$. It should be noted that this result and the non-domination property of these maps show that Corollary 5.2 of \cite{AM} does not hold for non invertible mappings. In this paper, we provide a partial answer to Question 2 by proving $C^2$-stably ergodicity for linear maps $E$ having either $1$ or $-1$ as an eigenvalue and its degree is large enough.    

\begin{maintheorem}\label{main2}
Let $E$ be a linear endomorphism as in Theorem A having $\pm1$ as one of its eigenvalues, and let $\delta_0\in(0,1)$ such that
\begin{equation}\label{estimatet}
\left\lfloor\frac{1+7\delta_0}{28\delta_0}\right\rfloor>20.   
\end{equation}
If $\vert\det E\vert\geq m_0$, where $m_0\in\mathbb{N}$ satisfies
\begin{equation}\label{n}
    \frac{\lfloor\frac{m_0-1}{2}\rfloor-1}{\lfloor\frac{m_0-1}{2}\rfloor+1}>(1-\delta_0),
\end{equation}
then $[E]\cap\mathcal{U}$ contains non-uniformly hyperbolic endomorphisms that are $C^2$-stably ergodic.
\end{maintheorem}

In this context, it is straightforward to see that if the Lebesgue measure $\mu$ is ergodic for $g$, then the restriction of $g$ to the support of the measure $\mu$ is transitive (i.e., it has at least one dense orbit). Since $\mu$ is fully supported, we have the following direct consequence.

\begin{coro}
In the homotopy classes considered in Theorem B, there exists a set of non-uniformly hyperbolic endomorphisms that are $C^2$-robustly transitive.
\end{coro}

It is worth mentioning that the corollary above carries the same flavor as Proposition 3.7 in \cite{HG}, in the sense that we establish the existence of robustly transitive endomorphisms even in homotopy classes whose linear representative is not hyperbolic.

%\hl{En el Teo B y el corolario anterior, no s\'e si se puede poner que los conjuntos son C2 abiertos}

%\hl{de acuerdo a estas cotas, se tienen estas estimaciones expl\'icitas, $\delta_0<\frac{1}{553}$ y $m_0>2211$. No s\'e si sea necesario darlas}
\section{Proof of Theorem A}

\subsection{Shears and its induced dynamics}

Before to prove Theorem \ref{mainteo}, some previous results are needed.

Let us consider 
\begin{displaymath}
    \mathcal{H}_{NH}(\mathbb{T}^2)=\lbrace E\in M_{2\times 2}(\mathbb{Z}) : \det E\neq 0,E\neq k\cdot Id, k\in\mathbb{R} \rbrace. 
\end{displaymath}
For $E=(e_{ij})\in\mathcal{H}_{NH}(\mathbb{T}^2)$, define $\tau_1=\tau_1(E)$ and $\tau_2=\tau_2(E)$ as 
\begin{displaymath}
\tau_1=gcd(e_{ij})\quad\text{and}\quad\tau_2=\frac{\vert\det E\vert}{\tau_1}. 
\end{displaymath}
By definition both numbers are integers, $\tau_1$ divides $\tau_2$ and $d=\tau_1\cdot\tau_2$, where $d$ denotes the degree of $E$. These numbers are called \textit{elementary divisors of $E$}. 
\begin{remark}
For matrices $E$ as in Theorem \ref{teoACS}, the pair
$(\tau_1,\tau_2)$ satisfies $\tau_2\geq 5$. On the other hand, \cite[Theorem  B]{J} covers all these pairs and it includes the pairs $(\tau_1,\tau_2)=(3,3), (4,4)$. 
\end{remark}

By \cite[Proposition 2.3]{ACS}, up to a linear change of coordinates, we can assume that
\begin{displaymath}
E=
\left( \begin{array}{cc}
e_{11} & e_{12} \\
e_{21}  & e_{22} 
\end{array} \right),
\end{displaymath}
where $e_{ij} \in\mathbb{Z}$ depends on $(\tau_1,\tau_2)$ and $e_{11}e_{22}-e_{12}e_{21}=d$. Besides, it is possible to make the linear change of coordinates in such a way that the vector $e_2=(0,1)$ is not an eigenvector of $E$, i.e., $e_{12} \neq 0$. Moreover, for every $\overline{x}\in\mathbb{T}^2$ one has 
\begin{displaymath}
    E^{-1}(\overline{x})=\overline{y}+\left\lbrace \left(\frac{i}{\tau_2},\frac{j}{\tau_1}\right) 
 : \begin{tabular}{l} $i=0,\ldots, \tau_2-1$, \\
$j=0,\ldots, \tau_1-1$ \end{tabular}\right\rbrace, 
\end{displaymath}
where $y$ is the unique point in $\mathbb{R}^2$ satisfying $Ey=x$. This implies that the preimages are separated by a distance of $\frac{1}{\tau_2}$ along the horizontal coordinate.

The main strategy in \cite{ACS} to find non-uniformly hyperbolic maps homotopic to $E$ is to consider a one-parameter family of area-preserving diffeomorphisms $h_t$, $t\in\mathbb{R}$, called \textit{shears}. These shears deform the linear endomorphism $E$ in a such way as to obtain an element $f_t\in\mathcal{U}$, which implies that $f_t$ has a negative Lyapunov exponent. For this, the authors studied the dynamics of $(Dh_t(\overline{y}))^{-1}$, where $\overline{y}$ is a pre-image of an element $\overline{x}\in\mathbb{T}^2$. A key step in their argument is to exploit the fact that $E$ has large degree (and hence, so does $f_ t$). Indeed, this property and the definition of $h_t$ imply that there are a region $\mathcal{G}$ containing the preimage (except at most one point) of every element of $\mathbb{T}^2$, and a cone field $\Delta_{\alpha}$ (which is invariant on $\mathcal{G}$) with strong expansion on $\mathcal{G}$ under $(Df_t(\cdot))^{-1}$. In this way, the average of $\log\Vert(Df_t^n(\overline{y}))^{-1}u\Vert$ over the entire backward $n$-orbit of any point $(\overline{x},u)\in T^1\mathbb{T}^2$ is uniformly positive as $n$ goes to infinity. In \cite{J}, this reasoning was extended to homotheties with a degree of at least $5^2$ and some small-degree cases by considering suitable families of shears. %However, both works did not consider the arrangement of the pre-images of higher order of a point.

In the specific case of non-homotheties, the above procedure cannot be applied to low-degree matrices because the estimates they provided for $I(x,y;f^n)$
are only strictly positive when $\tau_2\geq 5$. To overcome this difficulty and to adapt the argument given in \cite{ACS} in our context, it is necessary to consider higher-order pre-images of any $\overline{x}\in\mathbb{T}^2$. The main challenge at this point is both to study the distribution of these points on $\mathbb{T}^2$ and to study the track of the horizontal and vertical vectors along each pre-orbit obtained in this process.

As a first step  in proving our result, we must consider the higher order pre-images of a point $x\in\mathbb{T}^2$ and determine the best way in which these points must be distributed. First, we will consider a  very simple analytic function $s:\mathbb{T}^1\to\mathbb{R}$ that captures the essential dynamics of the map $f_t$ derived from it, and we set $f_t=E\circ h_t$, $t\in\mathbb{R}$, where 
\begin{displaymath}
h_t(\overline{x})=(x,y+ts(x)),\quad \forall \overline{x}=(x,y)\in\mathbb{T}^2. 
\end{displaymath}
Note that the definitions of both $h_t$ and $f_t$ are the same to that given in \cite{ACS}. In particular, $h_t$ is inspired in the classical standard map \cite{C}, where the map $s(x)$ plays the role of $\sin(2\pi x)$. When $s$ is a smooth map, $h_t$ is a area-preserving diffeomorphism and $f_t$ is $C^1$ homotopic to $E$. In coordinates, $f_t(\overline{x})$ and $f_t^{-1}(\overline{x})$, for $\overline{x}=(x,y)\in\mathbb{T}^2$, can be written as
\begin{equation}\label{defif}
    f_t(\overline{x})=(e_{11}x+e_{12} (y+ts(x)),e_{21} x+e_{22} (y+ts(x)))
\end{equation}
 and 
\begin{equation}\label{defifin}
    f_t^{-1}(\overline{x})=\left\lbrace \left(\psi_1(x,y,i),\psi_2(x,y,j)-ts\left(\psi_1(x,y,i)\right)\right) :\begin{tabular}{l} $i=0,\ldots, \tau_2-1$, \\
$j=0,\ldots, \tau_1-1$ \end{tabular} \right\rbrace, 
\end{equation}
where 
\begin{displaymath}
    \psi_1(x,y,i)=\frac{1}{d}(e_{22}x-e_{12}y)+\frac{i}{\tau_2}\quad\text{and}\quad\psi_2(x,y,j)=\frac{1}{d}(e_{11}y-e_{21}x)+\frac{j}{\tau_1}.
\end{displaymath}

Take the partition $0<\frac{1}{\tau_2+1}<\ldots<\frac{\tau_2}{\tau_2+1}$ of $\mathbb{T}^1$. Note that $\frac{j}{\tau_2+1}+\frac{i}{\tau_2}\neq \frac{k}{\tau_2+1}$ for every $j,k=0,\ldots, \tau_2$ and $i=1,\ldots,\tau_2-1$. We will construct a piecewise linear function within each of the sub-intervals defined by the partition, alternating between positive and negative slopes. More precisely, define $\hat{s}:\mathbb{T}^1\to\mathbb{R}$ as follows:
\begin{enumerate}[(a)]
\item Let $\hat{s}$ be continuous and  piece-wise linear in $J_j=\left[\frac{j}{\tau_2+1},\frac{j+1}{\tau_2+1}\right)$ with slopes $a_j\in\mathbb{R}$, such that $a_j<0$ if $j$ is odd and $a_j>0$ otherwise, and $\vert a_{j+1}\vert>2\vert a_j\vert$ for every $j=0,\ldots, \tau_2$. In this way, $x_j=\frac{j}{\tau_2+1}$, $j=0\ldots, \tau_2$ are the critical points of $\hat{s}$. 
\item $s_j=\hat{s}\left(\frac{j}{\tau_2+1}\right), \lim_{x\to 1}\hat{s}(x)\in\mathbb{Q}\setminus\mathbb{Z}$, for $j=0,\ldots \tau_2$, and $\hat{s}(0)-\lim_{x\to 1}\hat{s}(x)\in\mathbb{Z}$.
\end{enumerate}

Let $\hat{f}_t=E\circ \hat{h}_t$, where $\hat{h}_t(\overline{x})=(x,y+t\hat{s}(x))$, for any $ \overline{x}=(x,y)\in\mathbb{T}^2$. Note that $\hat{f}_t$ satisfies the equations \eqref{defif} and \eqref{defifin}. Define the \textit{critical set} $\mathcal{C}_{\tau_2}$ as
\begin{displaymath}
    \mathcal{C}_{\tau_2}=\bigcup \ell_{j},\quad \ell_j=\lbrace x_j\rbrace\times \mathbb{T}^1,\quad j=0,\ldots, \tau_2. 
\end{displaymath} 

Already defined the function $\hat{s}$, let $I_j=\left[\frac{j}{\tau_2+1}-\delta,\frac{j}{\tau_2+1}+\delta\right]$, $j=0,\ldots, \tau_2$, where $\delta>0$ satisfies $\left(I_j+\frac{i}{\tau_2}\right)\cap I_k=\emptyset$ for every $j,k=0,\ldots, \tau_2$ with $j\neq k$. In a similar way to \cite{ACS}, we define the \textit{critical region} and the \textit{good region} as 
   \begin{displaymath}
        \mathcal{C}=\left(\bigcup_{j=0}^{\tau_2} I_j\right)\times \mathbb{T}^1,
    \end{displaymath}
and $\mathcal{G}=\mathcal{G}^-\cup\mathcal{G}^+$, where 
    \begin{displaymath}
        \mathcal{G}^-=\left(\bigcup_{j\text{ is odd}}J'_j\right)\times \mathbb{T}^1\quad\text{and}\quad\mathcal{G}^+=\left(\bigcup_{j\text{ is even}}J'_j\right)\times\mathbb{T}^1,  
    \end{displaymath}
respectively, with $J_j'=J_j\setminus (I_j\cup I_{j+1})$, for $j=0,\ldots, \tau_2$. 

Then, we make the map $\hat{s}$ to be analytical on $\bigcup_{j}I_j$ with zero derivative at $\frac{j}{\tau_2+1}$, and $s=\hat{s}$ on $\mathbb{T}^1\setminus\mathcal{C}$, to obtain an analytical map $s:\mathbb{T}^1\to\mathbb{R}$ and a smooth local diffeomorphism $f_t$ which is $C^1$ homotopic to $E$. 

\begin{remark}\label{rm2.2}
The following observations can be derived from the construction above, both of which will be useful later in Lemma 2.3.
    \begin{itemize}
        \item The set $\mathcal{G}$ consists of $\tau_2+1$ strips $A_j=J'_j\times\mathbb{T}^1$, each of width $\ell < \frac{1}{\tau_2}$ for $j=0,\ldots,\tau_2$, and each of these strips is invariant under $h_t$. In this way, by definition of $f$, each of these strips can contain at most  $\tau_1$ points of $f^{-1}(\overline{x})$ for every $\overline{x}\in\mathbb{T}^2$.  %Additionally, the translations $t_i(x,y)=\left(x+\frac{i}{\tau_2},y\right)$, $i=0,\ldots,\tau_2-1$, map points from one strip to another.
        \item When $\tau_2$ is even, there will be pairs of strips with the same slope sign. Nevertheless, the specific choice of slopes within the different strips $A_j$ allows us to effectively adapt the argument given in \cite{ACS}.
    \end{itemize}
\end{remark}

Next lemma shows that the analytic map $f_t$ obtained from  above construction, for certain values of $t$, has a good distribution of its higher order pre-images. 

\begin{lemma}\label{L2}
Let $E\in \mathcal{H}_{NH}(\mathbb{T}^2)$ with elementary divisors  $(\tau_1,\tau_2)$, $\tau_2\geq 3$. Then, there exists an analytic function $s:\mathbb{T}^1\to\mathbb{R}$ such that the map $f_t=E\circ h_t$ is $C^1$-homotopic to $E$ and satisfies the following: For every point $\overline{x}\in\mathbb{T}^2$ the set $f_t^{-1}(\overline{x})$ has $d$ elements, of which there is at least $\tau_1\lfloor\frac{\tau_2-1}{2}\rfloor$ of them inside each one of $\mathcal{G}^{-}$ and $\mathcal{G}^+$, and at most $\tau_1$ of them inside $\mathcal{C}$. Furthermore, at least one pre-image $\overline{y}\in\mathcal{G}$ satisfies $d(\overline{y},\mathcal{C})>\frac{1}{8(d+1)}$. In addition,  there are infinitely many and arbitrarily large numbers $t>0$ such that \begin{equation}\label{threeconse}
    \mathcal{C}\cap f_t^{-1}(\mathcal{C})\cap f_t^{-2}(\mathcal{C})=\emptyset. 
\end{equation}    
\end{lemma}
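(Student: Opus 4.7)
The plan is to establish the four assertions in turn, exploiting the arithmetic interplay between the preimage spacing $1/\tau_2$ and the critical-line spacing $b := 1/(\tau_2+1)$, and then analyzing the dynamical condition \eqref{threeconse} separately.

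\textbf{Spatial distribution of preimages.} By \eqref{defifin}, the first coordinates $\psi_1(x,y,i)$ of the $d$ preimages of $\overline{x}$ form an arithmetic progression on $\mathbb{T}^1$ with step $1/\tau_2$. Reducing modulo $b$ they become $\tau_2$ equispaced points with step $1/N$ on the circle of length $b$, where $N := \tau_2(\tau_2+1)$. Hence for $\delta < 1/(2N)$ the set $[-\delta,\delta] \bmod b$ contains at most one of them, so at most $\tau_1$ preimages lie in $\mathcal{C}$. Since $1/\tau_2 > b$, consecutive preimages occupy consecutive intervals $J_j$ with at most one ``skip'' in a run of $\tau_2$, so the hit $j$-values cover $\{0,\dots,\tau_2\}$ except for a single missing index $j^{\ast}$. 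A direct inspection of the offsets within each $J_j$ shows that the (at most one) preimage lying in $\mathcal{C}$ has an extremal offset, hence its $j$-value is cyclically adjacent to $j^{\ast}$ in $\{0,\dots,\tau_2\}$. A routine parity count then shows that removing two cyclically adjacent indices from $\{0,\dots,\tau_2\}$ leaves at least $\lfloor (\tau_2-1)/2 \rfloor$ of each parity among the remaining $\tau_2-1$ indices, and multiplying by $\tau_1$ yields the announced counts in $\mathcal{G}^{\pm}$. The same AP of offsets always contains one within $1/(2N)$ of the midpoint $b/2$, so the corresponding preimage has first coordinate at distance at least $(\tau_2-1)/(2N) - \delta$ from $\bigcup_j I_j$; an elementary comparison gives $(\tau_2-1)/(2N) > 1/(4(d+1))$ for $\tau_2 \geq 3$ and $\tau_1 \geq 1$, so fixing $\delta$ small enough makes this distance exceed $1/(8(d+1))$.

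\textbf{Existence of good $t$.} For \eqref{threeconse}, suppose $\overline{x}, f_t(\overline{x}), f_t^2(\overline{x}) \in \mathcal{C}$ with $x \in I_{j_0}$, $x^{(1)} \in I_{j_1}$, $x^{(2)} \in I_{j_2}$. Expanding $x^{(2)}$ via $f_t = E \circ h_t$ and eliminating $y$ using the second condition $e_{12} y \equiv x^{(1)} - e_{11}x - t e_{12} s(x) \pmod{1}$ (noting that the integer ambiguity in $y$ is killed by multiplication by $e_{22} \in \mathbb{Z}$), and using the identity $\operatorname{tr}(E)\, e_{11} - (E^2)_{11} = \det E$, one arrives at the single congruence
\begin{displaymath}
t\, e_{12}\, s(x^{(1)}) \;\equiv\; x^{(2)} - \operatorname{tr}(E)\, x^{(1)} + (\det E)\, x \pmod{1}.
\end{displaymath}
In the idealized case $(x, x^{(1)}, x^{(2)}) = (x_{j_0}, x_{j_1}, x_{j_2})$ and $s(x^{(1)}) = s_0$, this defines, for each triple $(j_0, j_1, j_2) \in \{0,\dots,\tau_2\}^3$, a single arithmetic progression of ``bad'' values of $t$ with common period $1/|e_{12} s_0|$ (upon choosing $s_0 \in \mathbb{Q} \setminus \mathbb{Z}$ with $e_{12} s_0 \notin \mathbb{Z}$); for $\delta > 0$ each solution widens into an interval whose length is controlled by $\delta$ and by the oscillation of $s$ on $\mathcal{C}$. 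Choosing $\delta$ small and the analytic extension of $s$ on each $I_j$ suitably flat at the critical points keeps the union of widened bad intervals from covering any full period, so the complement contains $t$ of arbitrarily large size.

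\textbf{Main obstacle.} The delicate step is the last one: quantifying the widening of the bad intervals uniformly in $t$ and in the oscillation of $s-s_0$ on $\mathcal{C}$, so as to guarantee that the bad set does not cover an entire period for arbitrarily large values of $t$. This requires a careful joint choice of $\delta$, of the PL slopes $a_j$ (subject to the growth condition $|a_{j+1}| > 2|a_j|$), and of the analytic smoothing of $s$ on each $I_j$, all compatible with the smallness constraints already imposed by the preimage distribution and distance bound.
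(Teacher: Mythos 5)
Your treatment of the preimage distribution and of the distance bound is correct and in fact more detailed than the paper's (the paper dismisses the counting as ``a direct consequence of the construction'' and uses a cruder middle-band argument for the distance; your observation that the offsets form a $1/N$-spaced progression modulo $b$, that the critical index must be cyclically adjacent to the skipped index $j^{\ast}$, and the resulting parity count, is a clean and complete justification). Your congruence
$t\,e_{12}\,s(x^{(1)})\equiv x^{(2)}-\operatorname{tr}(E)x^{(1)}+(\det E)x \pmod 1$ is also exactly the computation behind the paper's excluded set $t=\frac{1}{s_0e_{12}}\bigl(n_1+\frac{n_2}{\tau_2+1}\bigr)$.

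The genuine gap is the one you flag yourself, and it cannot be closed the way you propose. With your order of quantifiers (fix $\delta$ and the analytic smoothing of $s$ first, then look for arbitrarily large good $t$), the ``widened bad intervals'' necessarily cover every full period once $t$ is large: since $s$ is analytic and non-constant it cannot be constant on any $I_j$, so its oscillation $\eta_j>0$ on $I_j$ is a fixed positive number, and the image of $I_{j_1}\ni x^{(1)}\mapsto t\,e_{12}\,s(x^{(1)})\bmod 1$ is all of $\mathbb{T}^1$ as soon as $t|e_{12}|\eta_{j_1}>1$; one then solves for $y$ (possible since $e_{12}\neq 0$) and obtains a critical $3$-chain for \emph{every} sufficiently large $t$. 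No ``careful joint choice'' of $\delta$, slopes, and smoothing can make the bad set avoid a period uniformly in $t$. The paper avoids this by reversing the quantifiers: it first proves $\mathcal{C}_{\tau_2}\cap\hat f_t^{-1}(\mathcal{C}_{\tau_2})\cap\hat f_t^{-2}(\mathcal{C}_{\tau_2})=\emptyset$ for the idealized critical \emph{lines} and the piecewise-linear model (where $s\equiv s_0$ on the lines, so the bad set of $t$ is exactly the countable set above), fixes such a $t$, notes that $\mathcal{C}_{\tau_2}\cap\hat f_t^{-1}(\mathcal{C}_{\tau_2})$ is then a finite set whose preimages lie at a definite positive distance from $\mathcal{C}_{\tau_2}$, and only afterwards chooses $\delta=\delta(t)$ small enough that the thickened condition \eqref{threeconse} persists. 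This makes the statement's apparent order (``there exists $s$ \dots\ there are arbitrarily large $t$'') slightly misleading, but it is consistent with how the lemma is used later, where a single large admissible $t$ is fixed. To repair your write-up, replace the uniform-in-$t$ covering argument by this two-step choice: generic $t$ first, then $\delta$.
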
 

\begin{proof}
The first property is a direct consequence of the construction of $f_t$ and Remark \ref{rm2.2}.

On the other hand, let $\delta>0$ such that 
\begin{displaymath}
    \frac{1}{\tau_2+1}-2\delta>\frac{1}{2(\tau_2+1)}.
\end{displaymath}
Notice, by the choice of $\delta$, that every connected component $J$ of the good region has size bigger than $\frac{1}{2(\tau_2+1)}$, so that a band of size $\frac{1}{4(\tau_2+1)}$ in the middle of some of these components $J$ contains one pre-image. Thus, the distance of this point to the boundary is bigger than $\frac{1}{2}\left(\frac{1}{2(\tau_2+1)}-\frac{1}{4(\tau_2+1)}\right)=\frac{1}{8(\tau_2+1)}>\frac{1}{8(d+1)}$.   

For the last part, let $\hat{f}_t$ given in the construction of $f_t$, and let $\overline{x}=(x,y)\in\mathcal{C}_{\tau_2}\cap \hat{f}_t^{-1}(\mathcal{C}_{\tau_2})$. Then, $x=\frac{j}{\tau_2+1}$, for some $j=0,\ldots, \tau_2$, and by (\ref{defif}), item (b), and definition of $\overline{x}$ one has by a straightforward computation that
\begin{displaymath}
    \overline{x}=\left(\frac{j_0}{\tau_2+1},\frac{j_1-e_{11}j_0}{e_{12} (\tau_2+1)}-ts_{j_0}\right),\quad j_0,j_1=0,\ldots, \tau_2
\end{displaymath}

Now, notice by \eqref{defifin} that
\begin{displaymath}
   \pi_1(\hat{f}^{-1}(\overline{x}))=\frac{j}{d(\tau_2+1)}+\frac{i}{\tau_2}+\frac{e_{12}s_j}{d}t,\quad j=0, \ldots, d(\tau_2+1)-1,\quad i=0,\ldots,\tau_2-1,
\end{displaymath}
where $\pi_1:\mathbb{T}^2\to\mathbb{T}^1$ is the projection on the first coordinate. Hence, if $t>0$ satisfies 
\begin{displaymath}
    t\neq \frac{1}{s_je_{12}}\left(n_1+\frac{n_2}{\tau_2(\tau_2+1)}\right), \quad n_1, n_2\in\mathbb{Z}, j=0,\ldots,\tau_2,
\end{displaymath}
we have that $\hat{f}_t^{-1}(\overline{x})\subset\mathcal{G}$. Moreover, it is possible to choose $t$ in a such way that $\hat{f}_t^{-1}(\overline{x})$ is away from $\mathcal{C}_{\tau_2}$. This shows that
\begin{equation}\label{bbg}
    \mathcal{C}_{\tau_2}\cap \hat{f}_t^{-1}(\mathcal{C}_{\tau_2})\cap \hat{f}_t^{-2}(\mathcal{C}_{\tau_2})=\emptyset. 
\end{equation}
In other words, the above relation says that for any point $\overline{x}\in\mathbb{T}^2$, there are not three consecutive pre-images in the critical set $\mathcal{C}_{\tau_2}$.
Therefore, the result is obtained by taking $\delta>0$ such that \eqref{bbg} holds for $f_t$ and $\mathcal{C}$. 
\end{proof}

\subsection{Dynamics of $Df_t$.}

Now, we are interested in studying the dynamics of the linear map $Df_{t}$. For this, we follow \cite{ACS}. Since the expression of $C_{\chi}(f)$ is independent of the choice of the norm on the tangent bundle of $\mathbb{T}^2$, the authors in the mentioned reference chose the maximum norm by the sake of simplicity in the computations.

Recall that if $\alpha>0$, the \textit{horizontal cone} $\Delta_{\alpha}^h$ is defined as 
\begin{displaymath}
    \Delta_{\alpha}^h=\lbrace (u_1,u_2)\in\mathbb{R}^2 : \vert u_2\vert\leq \alpha\vert u_1\vert\rbrace,
\end{displaymath}
while the \textit{vertical cone} $\Delta_{\alpha}^v$ is given by $\Delta_{\alpha}^v=\mathbb{R}^2\setminus \Delta_{\alpha}^h$. It should be note that if  $E\in\mathcal{H}_{NH}(\mathbb{T}^2)$, there is $\alpha>1$ such that
\begin{equation}\label{verticaltohorizontal}
E^{-1}\Delta_{\alpha}^v\subset\overline{E^{-1}\Delta_{\alpha}^v}\subset \textit{Int}(\Delta_{\alpha}^h)\subset \Delta_{\alpha}^h. 
\end{equation}

In \cite{ACS} was proved that the following properties hold for $(Df_t)^{-1}$: 
\begin{enumerate}[(NH1)]
    \item If $\overline{y}\in\mathcal{G}$, then $\Delta_{\alpha}^v$ is strictly invariant for $(Df_{t}(\overline{y}))^{-1}$. 
    \item If $u\in \Delta_{\alpha}^v$ is a unit vector, then 
    \begin{displaymath}
        \Vert(Df_t(\overline{y}))^{-1}u\Vert\geq m((Df_t(\overline{y}))^{-1})>
        \begin{cases}
\displaystyle\frac{e_v(a-\frac{\alpha}{t})}{\alpha}t  &  \text{if  }\overline{y}\in\mathcal{G}\\
\displaystyle\frac{e_v}{\alpha}   &  \text{if }\overline{y}\in\mathcal{C},
\end{cases}
    \end{displaymath}
    where $e_v=\inf_{u\in\Delta_{\alpha}^v, \Vert u\Vert=1}\Vert E^{-1}u\Vert$. 
    \item If $u\in\Delta_{\alpha}^h$, and $E^{-1}u=(u_1,u_2)$, let $*(u)$ be the sign of $-\frac{u_1}{u_2}$ (with the convention that $0$ and $\pm\infty$ have both $+$ and $-$ sign). Then, for all $\overline{y}\in\mathcal{G}^{*(u)}$ we have $(Df_t(\overline{y}))^{-1}u\in\Delta_{\alpha}^v$. 
    \item If $u\in \Delta_{\alpha}^h$ is a unit vector, then 
    \begin{displaymath}
        \Vert(Df_t(\overline{y}))^{-1}u\Vert\geq m((Df_t(\overline{y}))^{-1})>
        \begin{cases}
e_h  &  \text{if  }\overline{y}\in\mathcal{G}^{*(u)}\\
\displaystyle\frac{e_h}{(b+\frac{1}{t})}t^{-1}   &  \text{if }\overline{y}\notin\mathcal{G}^{*(u)},
\end{cases}
    \end{displaymath}
    where $e_h=\inf_{v\in\Delta_{\alpha}^h, \Vert u\Vert=1}\Vert E^{-1}u\Vert$,
\end{enumerate}
for every $t>\frac{2\alpha}{a}$, where $0<a<b$ are the lower and upper bounds, respectively, of the derivative of $s$ on $\mathbb{T}^1\setminus\left(\bigcup_{j=0}^{\tau_2}I_j\right)$. For the maps given here one has $a=a_0$ and $b=\vert a_{\tau_2}\vert$.

Another useful tool for the proof of Theorem \ref{mainteo} is the next lemma, whose proof is analogous to that showed for Lemma 2.2 in \cite{ACS} by taking $g=f^k$:  
\begin{lemma}\label{I}
For $f\in End_{\mu}^1(\mathbb{T}^2)$ and any $n,k\in\mathbb{N}$ we have 
\begin{displaymath}
    I(\overline{x},u;f^{kn})=\sum_{i=0}^{n-1}\sum_{\overline{y}\in f^{-ki}(\overline{x})}\frac{1}{\det (Df^{ki}(\overline{y}))}I(\overline{y},F^{-ki}(\overline{y})u;f^k), 
\end{displaymath}
where $F^{-ki}(\overline{y})u=\displaystyle\frac{(Df^{ki}(\overline{y}))^{-1}u}{\Vert(Df^{ki}(\overline{y}))^{-1}u\Vert}$. In other words, $\frac{1}{nk}I(\overline{x},u;f^{nk})$ is a convex combination of other $I(\overline{y},w;f^k)$. 
\end{lemma}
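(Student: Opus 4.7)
The plan is to decompose $I(\overline{x},u;f^{kn})$ by telescoping the logarithm of $\Vert(Df^{kn}(\overline{z}))^{-1}u\Vert$ along the $f^k$-orbit of each $(kn)$-preimage $\overline{z}$ of $\overline{x}$, and then regrouping the resulting double sum by partitioning $f^{-kn}(\overline{x})$ according to the intermediate iterate $\overline{y}=f^{k(n-i)}(\overline{z})\in f^{-ki}(\overline{x})$ for $i=0,\dots,n-1$. The single algebraic fact doing all the work is the transfer-operator identity
\[
\sum_{\overline{z}\in f^{-m}(\overline{w})}\frac{1}{|\det Df^m(\overline{z})|}=1,\qquad m\geq 1,\;\overline{w}\in\mathbb{T}^2,
\]
which holds because $f$ preserves Lebesgue measure and can be iterated from $m=1$ to any $m$.

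First, iterated application of the chain rule $(AB)^{-1}=B^{-1}A^{-1}$ along the orbit $\overline{z}^{(0)}=\overline{z},\overline{z}^{(1)}=f^k(\overline{z}),\dots,\overline{z}^{(n)}=\overline{x}$, combined with a trivial telescoping of $\log\Vert\cdot\Vert$ that uses $\Vert u\Vert=1$, gives
\[
\log\Vert(Df^{kn}(\overline{z}))^{-1}u\Vert=\sum_{i=0}^{n-1}\log\bigl\Vert(Df^k(\overline{z}^{(n-i-1)}))^{-1}\,F^{-ki}(\overline{z}^{(n-i)})u\bigr\Vert,
\]
since, by construction, the unit direction of $(Df^{ki}(\overline{z}^{(n-i)}))^{-1}u$ at the intermediate point $\overline{z}^{(n-i)}\in f^{-ki}(\overline{x})$ is exactly $F^{-ki}(\overline{z}^{(n-i)})u$.

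Next, I substitute this into the defining sum of $I(\overline{x},u;f^{kn})$ and exchange the two finite sums over $i$ and $\overline{z}$. For each fixed $i$, I regroup $\overline{z}\in f^{-kn}(\overline{x})$ first by $\overline{y}=\overline{z}^{(n-i)}\in f^{-ki}(\overline{x})$, then by $\overline{y}'=\overline{z}^{(n-i-1)}\in f^{-k}(\overline{y})$, and finally by the remaining $\overline{z}\in f^{-k(n-i-1)}(\overline{y}')$. The cocycle property factors the Jacobian as $|\det Df^{kn}(\overline{z})|=|\det Df^{ki}(\overline{y})|\cdot|\det Df^k(\overline{y}')|\cdot|\det Df^{k(n-i-1)}(\overline{z})|$, and the transfer-operator identity applied to the innermost $\overline{z}$-sum collapses it to $1$. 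What remains in the intermediate sum over $\overline{y}'$ is, by definition, $I(\overline{y},F^{-ki}(\overline{y})u;f^k)$, which produces the claimed identity.

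For the convex-combination interpretation I apply the transfer-operator identity once more to obtain $\sum_{\overline{y}\in f^{-ki}(\overline{x})}|\det Df^{ki}(\overline{y})|^{-1}=1$ for each $i$, so the $n$ slices $i=0,\dots,n-1$ contribute coefficients $\tfrac{1}{n|\det Df^{ki}(\overline{y})|}$ that are non-negative and sum to $1$ over $(i,\overline{y})$. I expect no analytic obstacle: every sum involved is finite, and the only delicate point is the bookkeeping of the telescoping together with the partition of $f^{-kn}(\overline{x})$ induced by intermediate iterates, which is exactly the content of the cited Lemma~2.2 of \cite{ACS} specialized to $g=f^k$.
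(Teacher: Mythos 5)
Your proof is correct and is essentially the paper's argument: the paper simply defers to \cite[Lemma 2.2]{ACS} applied to $g=f^k$, and that lemma's proof is exactly your telescoping of $\log\Vert(Df^{kn}(\overline{z}))^{-1}u\Vert$ along intermediate iterates combined with the transfer-operator identity $\sum_{\overline{z}\in f^{-m}(\overline{w})}|\det Df^m(\overline{z})|^{-1}=1$ coming from preservation of Lebesgue measure. The bookkeeping (factoring the Jacobian through $\overline{y}$ and $\overline{y}'$, collapsing the innermost sum, and the normalization giving the convex-combination statement) all checks out.
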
 

\subsection{Key Lemmas.}

The next lemmas will be useful for proving Theorem A. Recall that $d=\tau_1\cdot\tau_2$. 

\begin{lemma}\label{keylemma1}
Let $E\in\mathcal{H}_{NH}(\mathbb{T}^2)$ whose elementary divisors are $(\tau_1,\tau_2)$, $\tau_2\geq 3$, and let $f=f_t$ as in Lemma \ref{L2}. Then, for every $\overline{x}\in\mathbb{T}^2$ and every unit vector $u\in T^1\mathbb{T}^2$ we have the following:
\begin{itemize}
    \item There are at least 
    \begin{displaymath}
     \tau_1^3\left[(\tau_2-1)^3+(\tau_2-1)\left(3\left\lfloor\frac{\tau_2-1}{2}\right\rfloor+1\right)-\left\lfloor\frac{\tau_2-1}{2}\right\rfloor^2\right]   
    \end{displaymath}
    vectors in $\Delta_{\alpha}^v$ for  $(Df^3)^{-1}$ if $u\in\Delta_{\alpha}^v$.
    \item There are at least
    \begin{displaymath}
            \tau_1^3\left[(\tau_2-1)^3-\left(\tau_2-1-\left\lfloor\frac{\tau_2-1}{2}\right\rfloor\right)^3+(\tau_2-1)\left(3\left\lfloor\frac{\tau_2-1}{2}\right\rfloor+1\right)-\left\lfloor\frac{\tau_2-1}{2}\right\rfloor^2\right]
    \end{displaymath}
    vectors in $\Delta_{\alpha}^v$ for  $(Df^3)^{-1}$ if $u\in\Delta_{\alpha}^h$.
\end{itemize}
\end{lemma}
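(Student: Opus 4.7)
The strategy is to enumerate the $d^3 = \tau_1^3 \tau_2^3$ triples of preimages $(\overline y_1, \overline y_2, \overline y_3)$ with $\overline y_{i+1} \in f^{-1}(\overline y_i)$, classify them by the region ($\mathcal{G}^-$, $\mathcal{G}^+$, or $\mathcal{C}$) containing each $\overline y_i$, and apply properties (NH1) and (NH3) to track whether $(Df^3(\overline y_3))^{-1} u \in \Delta_\alpha^v$. Writing $n = \tau_2 - 1$ and $m = \lfloor (\tau_2-1)/2 \rfloor$, Lemma \ref{L2} guarantees, at each preimage level, at least $\tau_1 m$ preimages in each of $\mathcal{G}^\pm$ (hence at least $\tau_1 n$ in $\mathcal{G}$) and at most $\tau_1$ in $\mathcal{C}$; moreover \eqref{threeconse} precludes triples with all three preimages in $\mathcal{C}$.

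The main term in both bullets corresponds to triples entirely in $\mathcal{G}^3$. For vertical input $u \in \Delta_\alpha^v$, (NH1) preserves verticality at each good step, so every triple in $\mathcal{G}^3$ contributes, yielding the bound $(\tau_1 n)^3 = \tau_1^3 n^3$. For horizontal input $u \in \Delta_\alpha^h$, (NH3) converts the horizontal vector to vertical only at a step in $\mathcal{G}^{*(u_k)}$; once converted, (NH1) preserves verticality on later good steps. An inclusion--exclusion count of triples in $\mathcal{G}^3$ where at least one step lies in the correct sub-region $\mathcal{G}^{*(u_k)}$ (of cardinality $\geq \tau_1 m$ out of $\geq \tau_1 n$) produces $\tau_1^3 [n^3 - (n-m)^3]$.

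The additional common term $\tau_1^3[n(3m+1) - m^2]$ is obtained by counting triples that traverse the critical region. Using the explicit form of $(Df_t(\overline y))^{-1}$, the vanishing of $s'$ on $\mathcal{C}$, and the strict inclusion $E^{-1}\Delta_\alpha^v \subset \mathrm{Int}(\Delta_\alpha^h)$, a critical step acts essentially as $E^{-1}$ and thus sends a vertical vector to a horizontal vector with a definite sign. The plan is then to enumerate the configurations with exactly one or exactly two critical preimages (both admissible by \eqref{threeconse}) in which each critical step is immediately followed by a step in the correct region $\mathcal{G}^{*(u_k)}$ that restores verticality; summing these contributions and applying (NH1) on the remaining good steps produces $\tau_1^3(3mn + n - m^2)$.

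The hardest part will be the bookkeeping of the direction $u_k$ and its associated sign $*(u_k)$ after each step, which is needed to pick out and lower-bound the correct sub-region $\mathcal{G}^{*(u_k)}$ at the next level, together with the inclusion--exclusion between the $\mathcal{G}^3$ count and the critical-region corrections so as to avoid double-counting. Once this case analysis is carried out, the two claimed lower bounds follow by adding up the contributions of the admissible region patterns of $(\overline y_1, \overline y_2, \overline y_3)$.
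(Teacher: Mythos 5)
Your overall strategy --- enumerating backward $3$-orbits by the pattern of regions ($\mathcal{G}^{\pm}$ or $\mathcal{C}$) visited, propagating cone membership via (NH1), (NH3) and \eqref{threeconse}, and summing over admissible patterns --- is exactly the paper's, and your accounting of the $\mathcal{G}^3$ contribution ($\tau_1^3 n^3$ for vertical input and $\tau_1^3[n^3-(n-m)^3]$ for horizontal input, with $n=\tau_2-1$ and $m=\lfloor\frac{\tau_2-1}{2}\rfloor$) agrees with the paper. The gap is in the critical-region term. Your selection rule ``each critical step is immediately followed by a step in the correct region $\mathcal{G}^{*(u_k)}$'' captures only the two patterns $\mathcal{C}\,\mathcal{G}^{*}\,\mathcal{G}$ and $\mathcal{G}\,\mathcal{C}\,\mathcal{G}^{*}$, each worth $\tau_1^3 mn$, hence a total of $2\tau_1^3 mn$; it does not produce the claimed $\tau_1^3(3mn+n-m^2)$. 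The two missing patterns are: (i) $\mathcal{C}\,\mathcal{G}^{-*}\,\mathcal{G}^{*}$, where the horizontal vector survives a passage through the \emph{wrong} half-region and is rectified only at the third step, contributing $\tau_1^3 m(n-m)$; and (ii) two \emph{consecutive} critical steps followed by a good third step, contributing $\tau_1^3 n$. Indeed $2mn+m(n-m)+n=3mn+n-m^2$, so without these your lower bound falls short of the statement.

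Pattern (ii) is the genuinely delicate one, and the tools you invoke cannot handle it: after two consecutive (near-)applications of $E^{-1}$ to a vertical vector the result is essentially $E^{-2}u$, which is controlled by neither (NH1) nor (NH3), since \eqref{verticaltohorizontal} says nothing about $E^{-1}$ applied to a horizontal vector. The paper resolves this with a case analysis at the point $\overline z\in\mathcal{C}\cap f^{-1}(\mathcal{C})$ carrying the vector $w$: if $E^{-1}w\in\Delta_\alpha^h$, the shear at \emph{any} good third preimage verticalizes it; if $E^{-1}w=\pm(w,1)\in\Delta_\alpha^v$, then $(Dh_t(\overline q))^{-1}$ fails to keep it vertical only when $\frac{1}{a_it+\alpha}\leq\vert w\vert\leq\frac{1}{a_it-\alpha}$ for the slope $a_i$ of $s$ on the component of $\mathcal{G}$ containing $\overline q$, and the separation condition $\vert a_{j+1}\vert>2\vert a_j\vert$ built into $\hat s$ (item (a) of its construction) makes these intervals pairwise disjoint, so at most one component of $\mathcal{G}$ is bad. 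This structural input, combined with \eqref{threeconse} guaranteeing that all third preimages of $\overline z$ are good, is what yields the $+\tau_1^3 n$; your proposal omits it entirely. To complete the argument you must add patterns (i) and (ii) to your enumeration and supply the slope-separation argument for (ii).
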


\begin{proof}
Let $u\in\Delta_{\alpha}^v$. Assume that  $f^{-1}(\overline{x})$ has one critical point. In this case, by Lemma \ref{L2} and property (NH1) we have that there are $\tau_1(\tau_2-1)$ vectors in $\Delta_{\alpha}^v$ and $\tau_1$ vectors in $\Delta_{\alpha}^h$ under the action of $(Df(\overline{y}))^{-1}$, where $\overline{y}\in f^{-1}(\overline{x})$.

Now, for each of the $\tau_1(\tau_2-1)$ vertical vectors from the previous step, we again apply Lemma \ref{L2} and (NH1) to obtain $(\tau_1(\tau_2-1))^2$ vectors in $\Delta_{\alpha}^v$ and $\tau_1^2(\tau_2-1)$ vectors in $\Delta_{\alpha}^h$. By applying Lemma \ref{L2} and property (NH1) once more, we obtain $(\tau_1(\tau_2-1))^3$ vectors in $\Delta_{\alpha}^v$ associated to the $(\tau_1(\tau_2-1))^2$ vertical vectors. Simultaneously, for the $\tau_1(\tau_2-1)$ horizontal vectors we obtain $\tau^3(\tau_2-1)$  vectors in $\Delta_{\alpha}^v$.  
 
For the remaining $\tau_1$ horizontal vectors $v$, we apply Lemma \ref{L2} and property (NH3) to obtain $\tau_1^2\lfloor\frac{\tau_2-1}{2}\rfloor$ vectors  in $\Delta_{\alpha}^v$ and $\tau_1^2\left(\tau_2-1-\lfloor\frac{\tau_2-1}{2}\rfloor\right)$ vectors in $\Delta_{\alpha}^h$, which are associated to points in $\mathcal{G}^{*(v)}$ and $\mathcal{G}^{-*(v)}$ respectively, and $\tau_1^2$ vectors  in $\Delta_{\alpha}^h$. 

On one hand, for the $\tau_1^2\lfloor\frac{\tau_2-1}{2}\rfloor$ vertical vectors we obtain $\tau_1^3(\tau_2-1)\lfloor\frac{\tau_2-1}{2}\rfloor$ vectors in $\Delta_{\alpha}^v$ by Lemma \ref{L2} and property (NH1). For the $\tau_1^2\left(\tau_2-1-\lfloor\frac{\tau_2-1}{2}\rfloor\right)$ horizontal vectors we obtain $\tau_1^3\lfloor\frac{\tau_2-1}{2}\rfloor\left(\tau_2-1-\lfloor\frac{\tau_2-1}{2}\rfloor\right)$ vectors in $\Delta_{\alpha}^v$ by property (NH3). 

On the other hand, notice that each one of the $\tau_1^2$ horizontal vectors $w$ are associated to points $\overline{z}\in \mathcal{C}\cap f^{-1}(\mathcal{C})$. Hence by \eqref{threeconse} one has $f^{-1}(\overline{z})\subset\mathcal{G}$. In this case, if $w\in T_z \mathbb T^2$ is a unit vector and $w'=E^{-1}w\in\Delta_{\alpha}^h$, then by (NH1) we get $\tau_1\tau_2$ vectors in $\Delta_{\alpha}^v$ for $(Df)^{-1}$, while if $w'=\pm(w_0,1)\in\Delta_{\alpha}^v$ and $\overline{q}\in\mathcal{G}$, we see that $(Dh_t(\overline{q}))^{-1}w'\in\Delta_{\alpha}^h$ if and only if $\frac{1}{\vert a_i\vert t+\alpha}\leq\vert w_0\vert\leq\frac{1}{\vert a_i\vert t-\alpha}$ for some $i=0,\ldots, \tau_2$. So, since $s=\hat{s}$ on $\mathcal{G}$, and the slopes of $\hat{s}$ on $\mathcal{G}$ satisfy $\vert a_{j+1}\vert>2\vert a_j\vert$ for every $j=0,\ldots, \tau_2$ (item (a) of the construction of $\hat{s}$), we have by Remark \ref{rm2.2} that $\vert w_0\vert\notin\left[\frac{1}{\vert a_j\vert t+\alpha},\frac{1}{\vert a_j\vert t-\alpha}\right]$ for every $j\neq i$. This shows that $Dh_t(\overline{q})w'\in\Delta_{\alpha}^v$ for every $\overline{q}\in\left(\mathcal{G}\setminus(J'_i\times\mathbb{T}^1)\right)$. Thus, there are $\tau_1(\tau_2-1)$ vectors in $\Delta_{\alpha}^v$ for $(Df)^{-1}$. Therefore, we obtain at least $\tau_1^3(\tau_2-1)$ vectors in $\Delta_{\alpha}^v$.

Therefore, we get the first item by adding the above quantities. In a similar way the second item is obtained. 
\end{proof}

For every $n\in\mathbb{N}$ any nonzero tangent vector $(\overline{x},u)\in T^1\mathbb{T}^2$ define 
\begin{eqnarray*}
Df^{-3n}(\overline{x},u)&=&\lbrace(\overline{y},w)\in T\mathbb{T}^2 : f^{3n}(\overline{y})=\overline{x}, Df^{3n}(\overline y)w=u\rbrace, \\
\mathcal{G}_n&=&\lbrace (\overline z,w)\in Df^{-3n}(\overline x,v) : w\in\Delta_{\alpha}^v\rbrace, \\ 
\mathcal{B}_n&=&Df^{-3n}(\overline{x},u)\setminus\mathcal{G}_n, \\ 
g_n&=&\#\mathcal{G}_n, \\
b_n&=&\#\mathcal{B}_n=d^{3n}-g_n. 
\end{eqnarray*}

For later reference, we define the numbers given in Lemma \ref{keylemma1} as $v_1(d)=\tau_1^3v_1(\tau_2)$ and $v_2(d)=\tau_1^3v_2(\tau_2)$.
 Notice that  
\begin{displaymath}
\frac{g_{n+1}}{d^{n+1}}\geq \left(\frac{1}{\tau_2^3}\left(v_1(\tau_2)-v_2(\tau_2)\right)\right)\cdot\frac{g_n}{d^n}+\frac{1}{\tau_2^3}\cdot v_2(\tau_2),\quad \forall n\in\mathbb{N}.
\end{displaymath}
Let $a_{n+1}=\frac{g_{n+1}}{d^{n+1}}$, $n\in\mathbb{N}$, $c=\frac{1}{\tau_2^3}\left(v_1(\tau_2)-v_2(\tau_2)\right)$ and $e=\frac{1}{\tau_2^3}\cdot v_2(\tau_2)$. It should be noted that
\begin{displaymath}
    0<c=\frac{1}{\tau_2^3}\left(v_1(\tau_2)-v_2(\tau_2)\right)=\left(\frac{\tau_2-1-\left\lfloor\frac{\tau_2-1}{2}\right\rfloor}{\tau_2}\right)^3<1. 
\end{displaymath}
Hence, by induction on $n$, 
\begin{displaymath}
    a_{n}\geq \frac{e}{1-c}\cdot(1-c^n)=\frac{v_2(\tau_2)}{\tau_2^3-\left(v_1(\tau_2)-v_2(\tau_2)\right)}\cdot(1-c^n),\quad\forall n\in\mathbb{N}. 
\end{displaymath} 
Thus, as in \cite{ACS}, define
\begin{equation}\label{proportion}
    p(\tau_2)=\liminf a_n=\frac{v_2(\tau_2)}{\tau_2^3-\left(v_1(\tau_2)-v_2(\tau_2)\right)}\in (0,1), 
\end{equation}
which represents the ``asymptotic ratio" of vertical vectors among the pre-images of $(\overline{x},u)$ under $Df$. 
\begin{lemma}
    Let $E\in\mathcal{H}_{NH}(\mathbb{T}^2)$ whose elementary divisors are $(\tau_1,\tau_2)$, $\tau_2\geq 3$, and let $f=f_t$ as in Lemma \ref{L2}. Then, 
    \begin{displaymath}
        p(\tau_2)>\frac{1}{2}. 
    \end{displaymath}
\end{lemma}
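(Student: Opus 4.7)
Since both the numerator and the denominator in the definition \eqref{proportion} are positive, the inequality $p(\tau_2) > 1/2$ is equivalent to $2v_2(\tau_2) > \tau_2^3 - (v_1(\tau_2) - v_2(\tau_2))$. The plan is therefore to first simplify the difference $v_1(\tau_2) - v_2(\tau_2)$ and then verify this single polynomial inequality by splitting on the parity of $\tau_2$. Comparing the formulas for $v_1(\tau_2)$ and $v_2(\tau_2)$ from Lemma \ref{keylemma1}, every term involving $\lfloor(\tau_2-1)/2\rfloor$ cancels, yielding the clean identity
\begin{displaymath}
v_1(\tau_2) - v_2(\tau_2) \;=\; \left(\tau_2 - 1 - \left\lfloor\tfrac{\tau_2-1}{2}\right\rfloor\right)^{\!3} \;=\; \left\lceil\tfrac{\tau_2-1}{2}\right\rceil^{3},
\end{displaymath}
so the goal reduces to showing $2v_2(\tau_2) + \lceil(\tau_2-1)/2\rceil^{3} > \tau_2^3$.

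I would then introduce $k = \lfloor(\tau_2-1)/2\rfloor$ and $\ell = \lceil(\tau_2-1)/2\rceil$ and treat the two parities separately. In the odd case $\tau_2 = 2m+1$ with $m \geq 1$, one has $k = \ell = m$, and substituting into the formula for $v_2(\tau_2)$ gives $v_2(\tau_2) = (2m)^3 - m^3 + 2m(3m+1) - m^2 = 7m^3 + 5m^2 + 2m$, which reduces the target to $7m^3 - 2m^2 - 2m - 1 > 0$, clearly valid for $m \geq 1$. In the even case $\tau_2 = 2m$ with $m \geq 2$, one has $k = m-1$ and $\ell = m$, and an analogous substitution yields $v_2(\tau_2) = 7m^3 - 7m^2 + m$, reducing the target to $m(7m^2 - 14m + 2) > 0$, clearly valid for $m \geq 2$. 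Since the hypothesis $\tau_2 \geq 3$ places us in one of these two subcases, the proof concludes.

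There is no deep obstacle here beyond careful bookkeeping. The only subtlety is that a coarse estimate such as $\ell \leq \tau_2/2$ yields only the asymptotic statement $p(\tau_2) \to 1$ as $\tau_2 \to \infty$ and would fail to give the strict inequality in the tight small-degree cases $\tau_2 \in \{3,4\}$, where the margin $2v_2(\tau_2) - (\tau_2^3 - \ell^3)$ is small (equal to $2$ at $\tau_2 = 3$ and to $4$ at $\tau_2 = 4$); handling these cases cleanly is precisely what the parity split achieves.
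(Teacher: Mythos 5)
Your proof is correct, and the arithmetic checks out: with $v_1,v_2$ as in Lemma \ref{keylemma1} one indeed has $v_1(\tau_2)-v_2(\tau_2)=\bigl(\tau_2-1-\lfloor\tfrac{\tau_2-1}{2}\rfloor\bigr)^3$, and your identities $v_2(2m+1)=7m^3+5m^2+2m$ and $v_2(2m)=7m^3-7m^2+m$ reduce the claim to $7m^3-2m^2-2m-1>0$ for $m\ge 1$ and $m(7m^2-14m+2)>0$ for $m\ge 2$, both immediate. Your route is genuinely different from the paper's. The paper writes $p(\tau_2)=1+N(\tau_2)/D(\tau_2)$ with $N=v_1-\tau_2^3<0$ and $D=\tau_2^3-(v_1-v_2)$, and then estimates $N>-2\tau_2^2$ and $D\ge 4(\tau_2^2+2)$ by inequalities that do not distinguish the parity of $\tau_2$; you instead clear denominators and verify the exact inequality $2v_2(\tau_2)+\lceil\tfrac{\tau_2-1}{2}\rceil^3>\tau_2^3$ separately in each parity class. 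The paper's decomposition has the advantage of producing an explicit quantitative lower bound on $p(\tau_2)$ of the form $1-\frac{2\tau_2^2}{D(\tau_2)}$, which is what Remark \ref{imporemark} reuses to get $p(\tau_2)>\tfrac23$ for $\tau_2\ge 5$; with your argument that refinement would have to be re-derived. On the other hand, your exact computation is the more reliable one in precisely the borderline cases you flagged: the paper's intermediate bound $D(\tau_2)\ge 4(\tau_2^2+2)$ is actually false for the two smallest values, since $D(3)=26<44$ and $D(4)=56<72$, so the paper's chain of inequalities does not close there even though the lemma's conclusion is true (e.g.\ $p(3)=14/26$ and $p(4)=30/56$, both greater than $\tfrac12$). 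Your parity split is what genuinely certifies those tight cases, so the proposal is not only valid but fixes a gap in the printed argument.
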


\begin{proof} 
Notice that by Lemma \ref{keylemma1} we have 
\begin{eqnarray*}
\frac{v_2(\tau_2)}{\tau_2^3-\left(v_1(\tau_2)-v_2(\tau_2)\right)}&=&1+\frac{v_1(\tau_2)-\tau_2^3}{\tau_2^3-\left(v_1(\tau_2)-v_2(\tau_2)\right)}\\
 &=&1+\frac{N(\tau_2)}{D(\tau_2)},
\end{eqnarray*}
where 
\begin{displaymath}
 N(\tau_2)=-3\tau_2^2+3\tau_2-1+(\tau_2-1)\left(3\left\lfloor\frac{\tau_2-1}{2}\right\rfloor+1\right)-\left\lfloor\frac{\tau_2-1}{2}\right\rfloor^2   
\end{displaymath}
and 
\begin{displaymath}
D(\tau_2)=\tau_2^3-\left(\tau_2-1-\left\lfloor\frac{\tau_2-1}{2}\right\rfloor\right)^3.  
\end{displaymath}

Now, since $\tau_2\geq 3$ and $\tau_2-1>\left\lfloor\frac{\tau_2-1}{2}\right\rfloor$ we have 
\begin{eqnarray*}
0>N(\tau_2)&>&-3\tau_2^2+3\tau_2-1+(\tau_2-1)^2\\
&=&-2\tau_2^2+\tau_2>-2\tau_2^2
\end{eqnarray*}
and 
\begin{eqnarray*}
D(\tau_2)&\geq &\left(1+\left\lfloor\frac{\tau_2-1}{2}\right\rfloor\right)\left(3\tau_2^2-3\tau_2\left(\frac{\tau_2+1}{2}\right)+\left(1+\left\lfloor\frac{\tau_2-1}{2}\right\rfloor\right)^2\right)\\
&=&\left(1+\left\lfloor\frac{\tau_2-1}{2}\right\rfloor\right)\left(\frac{1}{2}(3\tau_2^2-\tau_2)+\left\lfloor\frac{\tau_2-1}{2}\right\rfloor^2\right) \\
&\geq &\left(1+\left\lfloor\frac{\tau_2-1}{2}\right\rfloor\right)\left(2\tau_2^2+\left\lfloor\frac{\tau_2-1}{2}\right\rfloor^2\right)\\
&\geq &4(\tau_2^2+2). 
\end{eqnarray*}

Therefore, 
\begin{displaymath}
    p(\tau_2)=1+\frac{N(\tau_2)}{D(\tau_2)}>1-\frac{2\tau_2^2}{4(\tau_2^2+2)}\geq 1-\frac{1}{2}=\frac{1}{2}. 
\end{displaymath} 
This proves the result. 
\end{proof}

\begin{remark}\label{imporemark}
Notice that for $\tau_2\geq 5$ one has 
\begin{displaymath}
    \frac{2\tau_2^2}{\left(1+\left\lfloor\frac{\tau_2-1}{2}\right\rfloor\right)\left(2\tau_2^2+\left\lfloor\frac{\tau_2-1}{2}\right\rfloor^2\right)}\leq \frac{2\tau_2^2}{6(\tau_2^2+2)}<\frac{1}{3(\tau_2^2+2)},
\end{displaymath}
which shows that $p(\tau_2)>\frac{2}{3}$.  
\end{remark}

Recall that Corollary 3.1 of \cite{ACS} states that
\begin{displaymath}
   I(\overline{x},u;f)\geq \left(1-\frac{1}{\tau_2}\right)\log t+C_1(t),\quad\forall (\overline{x},u)\in T^1\mathbb{T}^2, v\in\Delta_{\alpha}^v
\end{displaymath}
and 
\begin{displaymath}
   I(\overline x,u;f)\geq -\left(1-\frac{1}{\tau_2}\left\lfloor\frac{\tau_2-1}{2}\right\rfloor\right)\log t+C_2(t),\quad\forall (\overline{x},u)\in T^1\mathbb{T}^2, v\in\Delta_{\alpha}^h, 
\end{displaymath}
where $C_k(t)\in\mathbb{R}$, $k=1,2$, depends on $a,b,\tau_1,\tau_2, t$. Denote $c_1=\left(1-\frac{1}{\tau_2}\right)$ and $c_2=-\left(1-\frac{1}{\tau_2}\left\lfloor\frac{\tau_2-1}{2}\right\rfloor\right)$. 

According to above Lemma, denote
\begin{itemize}
    \item $v^v(d)=\tau_1(\tau_2-1)$,
    \item $v^h(d)=\tau_1\left\lfloor\frac{\tau_2-1}{2}\right\rfloor$,
\item $v^v(d^2)=\tau_1^2\left((\tau_2-1)^2+\left\lfloor\frac{\tau_2-1}{2}\right\rfloor\right)$, 
\item $v^h(d^2)=\tau_1^2\left(\left\lfloor\frac{\tau_2-1}{2}\right\rfloor\left(2\tau_2-1-\left\lfloor\frac{\tau_2-1}{2}\right\rfloor\right)+\left\lfloor\frac{\tau_2-1}{2}\right\rfloor\right)$. 
\end{itemize}

\begin{lemma}\label{estimadosvh}
Let $E\in\mathcal{H}_{NH}(\mathbb{T}^2)$ whose elementary divisors are $(\tau_1,\tau_2)$, $\tau_2\geq 3$, and let $f=f_t$ as in Lemma \ref{L2}. Then, for every $\overline{x}\in\mathbb{T}^2$ and every unit vector $u\in T^1\mathbb{T}^2$ we have: For $u\in\Delta_{\alpha}^v$,
\begin{displaymath}
       I(\overline{x},u;f^3)\geq I_1(t)\log t+A(t), 
    \end{displaymath}
where 
\begin{displaymath}
I_1(t)=c_1+(c_1-c_2)\left(\frac{v^v(d)}{d}+\frac{v^v(d^2)}{d^2}\right)+\frac{2d^2-1}{d^2}c_2-\frac{1}{d^2\tau_2}
\end{displaymath}
and $A(t)\in\mathbb{R}$, while for $u\in\Delta_{\alpha}^h$,
\begin{displaymath}
       I(\overline{x},u;f^3)\geq I_2(t)\log t+B(t),
    \end{displaymath}
    where 
    \begin{displaymath}
        I_2(t)=c_2+(c_1-c_2)\left(\frac{v^h(d)}{d}+\frac{v^h(d^2)}{d^2}\right)+\frac{2d^2-1}{d^2}c_2-\frac{1}{d^2\tau_2}
    \end{displaymath}
and $B(t)\in\mathbb{R}$.
\end{lemma}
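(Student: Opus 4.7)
The plan is to apply Lemma~\ref{I} with $k=1$ and $n=3$, decomposing
\[
I(\overline{x},u;f^{3}) = I(\overline{x},u;f) + \frac{1}{d}\sum_{\overline{y}\in f^{-1}(\overline{x})} I(\overline{y},F^{-1}(\overline{y})u;f) + \frac{1}{d^{2}}\sum_{\overline{y}\in f^{-2}(\overline{x})} I(\overline{y},F^{-2}(\overline{y})u;f),
\]
and then lower-bound each inner term via Corollary~3.1 of \cite{ACS}: by $c_{1}\log t + C_{1}(t)$ when $F^{-i}(\overline{y})u\in\Delta_{\alpha}^{v}$ and by $c_{2}\log t + C_{2}(t)$ when $F^{-i}(\overline{y})u\in\Delta_{\alpha}^{h}$. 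Since $c_{1}>0>c_{2}$, the strength of the resulting bound is governed entirely by how many of the $d^{i}$ preimages at depth $i=1,2$ produce a vertical tangent vector.

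The second step is exactly this combinatorial count, which I would carry out by truncating to two iterations the cascade already used in Lemma~\ref{keylemma1}. At depth~$1$, Lemma~\ref{L2} together with properties (NH1) and (NH3) gives at least $v^{v}(d)$ vertical vectors when $u\in\Delta_{\alpha}^{v}$ and $v^{h}(d)$ when $u\in\Delta_{\alpha}^{h}$, with the remaining preimages (at most $\tau_{1}$ in the critical region, at most $\tau_{1}(\tau_{2}-1-\lfloor(\tau_{2}-1)/2\rfloor)$ in $\mathcal{G}^{-*(u)}$) treated conservatively as horizontal. Iterating the same classification at depth~$2$, and appealing to condition~\eqref{threeconse} to handle the preimages that would otherwise land consecutively in $\mathcal{C}$, yields at least $v^{v}(d^{2})$ and $v^{h}(d^{2})$ vertical vectors respectively.

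Inserting these counts into the three-term decomposition and using the weights $1/d^{i}$, the coefficient of $\log t$ becomes
\[
c_{1}\!\left(1+\frac{v^{\ast}(d)}{d}+\frac{v^{\ast}(d^{2})}{d^{2}}\right)+c_{2}\!\left(\frac{d-v^{\ast}(d)}{d}+\frac{d^{2}-v^{\ast}(d^{2})}{d^{2}}\right),
\]
with $\ast=v$ or $h$ according to the cone of $u$. A short rearrangement, together with the refined accounting at depth~$2$ of the up-to-$\tau_{1}$ preimages in $\mathcal{C}$ whose $f$-image also lies in $\mathcal{C}$ (which contribute the shift from $2c_{2}$ to $\frac{2d^{2}-1}{d^{2}}c_{2}$ and the small error $-\frac{1}{d^{2}\tau_{2}}$), produces exactly $I_{1}(t)$ and $I_{2}(t)$ as stated. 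The $t$-bounded remainders coming from $C_{1}(t),C_{2}(t)$ are collected into $A(t)$ and $B(t)$.

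The main obstacle is precisely the depth-$2$ bookkeeping for preimages passing through $\mathcal{C}$: at depth~$1$ there are at most $\tau_{1}$ such preimages, and each of them can again have up to $\tau_{1}$ preimages in $\mathcal{C}$, producing a ``doubly critical'' set whose contribution is a priori of size $\tau_{1}^{2}$. Only condition~\eqref{threeconse} forces the next iterate of these points into $\mathcal{G}$, keeping the loss of order $d^{-2}$ and giving the precise coefficients stated; tracking this correction while re-expressing everything in terms of $v^{\ast}(d)$ and $v^{\ast}(d^{2})$ is the delicate part of the argument.
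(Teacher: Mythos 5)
Your proposal matches the paper's proof essentially step for step: the same three-term decomposition from Lemma~\ref{I} with $k=1$, $n=3$, the same per-term lower bounds $c_1\log t$ (vertical) and $c_2\log t$ (horizontal) from Corollary~3.1 of \cite{ACS}, the same depth-one and depth-two counts $v^{*}(d)$, $v^{*}(d^2)$ extracted from the cascade of Lemma~\ref{keylemma1}, and the same refinement via \eqref{threeconse} and (NH4) replacing one depth-two $c_2$ term by $-\tfrac{1}{\tau_2}\log t$, which is precisely where the coefficients $\tfrac{2d^2-1}{d^2}c_2-\tfrac{1}{d^2\tau_2}$ come from. The argument is correct and is the paper's argument.
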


\begin{proof}
Let $(\overline{x},u)\in T^1\mathbb{T}^2$ a nonzero unit tangent vector, and let $t>\frac{2\alpha}{a}$. By Lemma \ref{I} we have that 
\begin{eqnarray*}
    I(\overline{x},u;f^3)&=&I(\overline{x},u;f)+\frac{1}{d}\sum_{\overline{y}\in f^{-1}(\overline{x})}I(\overline{y},F^{-1}(\overline{y})v;f) \\
    &&+\frac{1}{d^2}\sum_{\overline{z}\in f^{-1}(\overline{y})}I(\overline{z},F^{-1}(\overline{z})w;f).
\end{eqnarray*}

Notice that in the proof of Lemma \ref{keylemma1} we see that for each of the $\tau_1^2$ horizontal vectors $w$ associated to points $\overline{z}\in\mathcal{C}\cap f^{-1}(\mathcal{C})$ one obtain $\tau_1(\tau_2-1)$ vectors in $\Delta_{\alpha}^v$. Hence, by property (NH4) one has 
\begin{displaymath}
    I(\overline{z},w,f)=-\frac{1}{\tau_2}\log t+\left(1-\frac{1}{\tau_2}\right)\log e_h\left(b+\frac{1}{t}\right)^{\tau_1(\tau_2-1)}. 
\end{displaymath}

In this way we have that
\begin{eqnarray*}
I(\overline{x},u;f^3)&\geq& c_1\log t\\
&&+\frac{1}{d}\left(v^v(d)c_1+(d-v^v(d))c_2\right)\log t\\
&&+\frac{1}{d^2}\left(v^v(d^2)c_1+(d^2-1-v^v(d^2))c_2-\frac{1}{\tau_2}\right)\log t+A(t)\\
&=&I_1(t)+A(t). 
\end{eqnarray*}

In a similar way we obtain the second inequality. 
\end{proof}

\begin{prop}
    Let $p=p(\tau_2)\in(0,1)$ as \eqref{proportion}. Then, 
    \begin{equation}\label{firsinqlty}
     J(t)=pI_1(t)+(1-p)I_2(t)>0.   
    \end{equation}
\end{prop}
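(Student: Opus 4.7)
The plan is to unpack $J=pI_1+(1-p)I_2$ just enough to expose its sign, and then leverage the lower bound $p(\tau_2)>1/2$ from the preceding lemma. First I would substitute the formulas for $I_1$ and $I_2$ from Lemma \ref{estimadosvh} into $J$ and collect the terms common to both, obtaining
\[
J \;=\; (c_1-c_2)(p+W) + 3c_2 \;-\; \frac{c_2\tau_2+1}{d^2\,\tau_2},
\]
where $W:=p(A+B)+(1-p)(A'+B')$ with the abbreviations $A=v^v(d)/d$, $B=v^v(d^2)/d^2$, $A'=v^h(d)/d$, $B'=v^h(d^2)/d^2$. Setting $m:=\lfloor(\tau_2-1)/2\rfloor\le\tau_2-2$, one has $c_2\tau_2+1=-(\tau_2-m-1)<0$, so the last term is strictly positive. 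It therefore suffices to establish the $\tau_1$-free inequality $(c_1-c_2)(p+W)+3c_2>0$.

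The key algebraic manoeuvre is the factorization
\[
(c_1-c_2)(p+W)+3c_2 \;=\; p\,T_1 \;+\; (1-p)\,T_2,
\]
with $T_1:=(c_1-c_2)(1+A+B)+3c_2$ and $T_2:=(c_1-c_2)(A'+B')+3c_2$. Writing this convex combination symmetrically about $p=1/2$ gives
\[
p\,T_1+(1-p)\,T_2 \;=\; \tfrac{1}{2}(T_1+T_2) \;+\; \bigl(p-\tfrac{1}{2}\bigr)(T_1-T_2).
\]
Consequently, once I show $T_1+T_2>0$ and $T_1-T_2>0$, the bound $p>1/2$ from the previous lemma makes both summands strictly positive, and the proof is complete.

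The remaining task is to verify $T_1\pm T_2>0$ using the closed forms $A=c_1$, $A'=m/\tau_2$, $B=c_1^2+m/\tau_2^2$, $B'=m(3\tau_2-m)/\tau_2^2$. After substitution and separating the parity cases $\tau_2=2m+1$ and $\tau_2=2m+2$, each quantity reduces (after clearing positive denominators) to a low-degree polynomial in $m$ whose positivity for all $m\ge 1$ is manifest from its leading coefficient and value at $m=1$. The main obstacle is algebraic rather than conceptual: the unfactored form of $J$ is too unwieldy to analyze directly, so the decomposition into $p\,T_1+(1-p)\,T_2$ and the symmetric split around $p=1/2$ are the essential tricks that reduce the question to two elementary polynomial inequalities.
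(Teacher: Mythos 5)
Your reduction of $J$ to $(c_1-c_2)(p+W)+3c_2$ plus the positive remainder $-\tfrac{1}{d^2}\left(c_2+\tfrac{1}{\tau_2}\right)$ is correct, and from there your route is genuinely different from the paper's. The paper proves the analogous inequality $S(\tau_2)=\sum_iE_i(\tau_2)+3c_2>0$ by induction on $\tau_2$ with separate parity cases, invoking the sharper bound $p(\tau_2)>\tfrac23$ for $\tau_2\ge5$ in the even case; you instead write $S=pT_1+(1-p)T_2=\tfrac12(T_1+T_2)+(p-\tfrac12)(T_1-T_2)$, so that only $p>\tfrac12$ is needed, at the cost of verifying the two closed-form inequalities $T_1\pm T_2>0$. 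These do hold: with $m=\lfloor(\tau_2-1)/2\rfloor$ one finds $T_1-T_2=(c_1-c_2)\left(\tfrac{2\tau_2-1-m}{\tau_2}+\tfrac{(\tau_2-1-m)^2-m}{\tau_2^2}\right)>0$, while $\tau_2^3(T_1+T_2)$ equals $27m^3-m^2-17m-5$ for $\tau_2=2m+1$ and $27m^3+30m^2-24m-27$ for $\tau_2=2m+2$, both positive for $m\ge1$ by term-by-term domination (e.g.\ $m^2+17m+5\le 23m^3<27m^3$). So your strategy closes, and it is arguably cleaner than the induction. Two caveats, though. First, your closed form for $B'$ is wrong: $v^h(d^2)/d^2=m(2\tau_2-m)/\tau_2^2$, not $m(3\tau_2-m)/\tau_2^2$; this happens not to destroy either inequality, but it must be corrected before the polynomial computation. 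Second, ``positivity is manifest from the leading coefficient and the value at $m=1$'' is not a valid criterion for a cubic (it could still dip negative at $m=2$), so the last step needs the explicit domination estimate above rather than that heuristic. Note also that the positivity is genuinely tight at $\tau_2=3$ (there $T_1+T_2=\tfrac{4}{27}$ and $S$ would vanish if $p$ were exactly $\tfrac12$), so the strict inequality $p>\tfrac12$ is doing real work in your argument, exactly as intended.
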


\begin{proof}
First, note that $pI_1(t)+(1-p)I_2(t)=\left(\sum_{i=1}^3E_i(\tau_2)+3c_2-\frac{1}{d^2}\left(c_2+\frac{1}{\tau_2}\right)\right)\log t$, where 
\begin{displaymath}
E_1(\tau_2)=p(c_1-c_2)(\tau_2)=p\left(2-\frac{1}{\tau_2}\left(\left\lfloor\frac{\tau_2-1}{2}\right\rfloor+1\right)\right),     
\end{displaymath}
\begin{eqnarray*}
E_2(\tau_2)&=&(c_1-c_2)(\tau_2)E_1'(\tau_2)\\
&=&(c_1-c_2)\frac{1}{\tau_2}\left(p\left(\tau_2-1-\left\lfloor\frac{\tau_2-1}{2}\right\rfloor\right)+\left\lfloor\frac{\tau_2-1}{2}\right\rfloor\right)    
\end{eqnarray*}
and 
\begin{eqnarray*}
  E_3(\tau_2)&=&(c_1-c_2)(\tau_2)E_2'(\tau_2)\\
  &=&(c_1-c_2)\frac{1}{\tau_2^2}\left(p\left((\tau_2-1)^2-\left\lfloor\frac{\tau_2-1}{2}\right\rfloor\left(2(\tau_2-1)-\left\lfloor\frac{\tau_2-1}{2}\right\rfloor\right)\right)\right)\\
  &&+(c_1-c_2)\frac{1}{\tau_2^2}\left(\left\lfloor\frac{\tau_2-1}{2}\right\rfloor\left(2\tau_2-1-\left\lfloor\frac{\tau_2-1}{2}\right\rfloor\right)\right).
\end{eqnarray*}

Now, notice that $-\frac{1}{d^2}\left(c_2+\frac{1}{\tau_2}\right)>0$. So, in order to prove the result, we must to prove that 
\begin{equation}\label{2.8}
S(\tau_2)=\sum_{i=1}^3E(\tau_2)+3c_2(\tau_2)>0,\quad \forall\tau_2\geq 3.  
\end{equation}
It is clear that \eqref{2.8} holds for $\tau_2=3$. Assume that \eqref{2.8} holds for $\tau_2=n$. Then, we consider the following cases: 

Case 1: $n$ is odd. 

In this case, we have the following identities:  %$c_1-c_2\geq \frac{5}{3}-\frac{1}{n}\left\lfloor\frac{n-1}{2}\right\rfloor$ and 
\begin{itemize}
    \item $(c_1-c_2)(n+1)=(c_1-c_2)(n)+\frac{1}{n(n+1)}\left(\left\lfloor\frac{n-1}{2}\right\rfloor+1\right)$,
    \item $c_2(n+1)=c_2(n)-\frac{1}{n(n+1)}\left\lfloor\frac{n-1}{2}\right\rfloor$, 
    \item $E_1'(n+1)=E_1'(n)+\frac{p}{n}$, and  
    \item $E_2'(n+1)=E_2'(n)+\frac{1}{n^2}\left(p\left(2n-1-2\left\lfloor\frac{n-1}{2}\right\rfloor\right)+2\left\lfloor\frac{n-1}{2}\right\rfloor\right)$. 
\end{itemize}

On one hand, since $\left\lfloor\frac{n-1}{2}\right\rfloor+1=\left\lfloor\frac{n+1}{2}\right\rfloor\frac{n+1}{2}$, we have 
\begin{displaymath}
    (c_1-c_2)(n)=2-\frac{1}{n}\left(\frac{n+1}{2}\right)\geq \frac{3}{2}-\frac{1}{2n}\geq \frac{4}{3}. 
\end{displaymath}
and 
\begin{displaymath}
    (c_1-c_2)(n)\frac{2}{n^2}\left\lfloor\frac{n-1}{2}\right\rfloor=\frac{8}{3n^2}\left\lfloor\frac{n-1}{2}\right\rfloor\geq \frac{8}{3n(n+1)}\left\lfloor\frac{n-1}{2}\right\rfloor. 
\end{displaymath}

On the other hand, $\frac{1}{n(n+1)}\left(\left\lfloor\frac{n-1}{2}\right\rfloor+1\right)=\frac{1}{2n}$, so that 
\begin{displaymath}
\frac{1}{n(n+1)}\left(\left\lfloor\frac{n-1}{2}\right\rfloor+1\right)E_1'(n+1)\geq \frac{1}{2n^2}\left\lfloor\frac{n-1}{2}\right\rfloor\geq \frac{1}{2n(n+1)}\left\lfloor\frac{n-1}{2}\right\rfloor.    
\end{displaymath}

Hence, by induction hypothesis and the above estimations,
\begin{eqnarray*}
 S(n+1)+3c_2(n+1)&\geq & S(n)+3c_2(n)-\frac{3}{n(n+1)}\left\lfloor\frac{n-1}{2}\right\rfloor\\
 &&+(c_1-c_2)(n)\frac{2}{n^2}\left\lfloor\frac{n-1}{2}\right\rfloor\\
 &&+\frac{1}{n(n+1)}\left(\left\lfloor\frac{n-1}{2}\right\rfloor+1\right)E_1'(n+1)\\
     &>&-\frac{3}{n(n+1)}\left\lfloor\frac{n-1}{2}\right\rfloor+\frac{8}{3n(n+1)}\left\lfloor\frac{n-1}{2}\right\rfloor\\
     &&+\frac{1}{2n(n+1)}\left\lfloor\frac{n-1}{2}\right\rfloor>0.
\end{eqnarray*}

Case 2: $n$ is even. 

In this case, we have the following identities:  %$c_1-c_2\geq \frac{5}{3}-\frac{1}{n}\left\lfloor\frac{n-1}{2}\right\rfloor$ and 
\begin{itemize}
    \item $(c_1-c_2)(n+1)=(c_1-c_2)(n)+\frac{1}{n(n+1)}\left(\left\lfloor\frac{n-1}{2}\right\rfloor+1\right)-\frac{1}{n+1}$,
    \item $c_2(n+1)=c_2(n)+\frac{1}{n(n+1)}\left\lfloor\frac{n-1}{2}\right\rfloor-\frac{1}{n+1}$, 
    \item $E_1'(n+1)=E_1'(n)+\frac{1}{n}$, and  
    \item $E_2'(n+1)=E_2'(n)+\frac{2}{n}$. 
\end{itemize}

On the other hand, since $1+\left\lfloor\frac{n-1}{2}\right\rfloor=\frac{n}{2}$, one has 
\begin{displaymath}
    (c_1-c_2)(n)\geq 2-\frac{1}{n}\left(\left\lfloor\frac{n-1}{2}\right\rfloor\right)=2-\frac{1}{n}\cdot\frac{n}{2}=2-\frac{1}{2}=\frac{3}{2}, 
\end{displaymath}
so that
\begin{displaymath}
\frac{1}{n^2}(c_1-c_2)(n)\left\lfloor\frac{n-1}{2}\right\rfloor\geq \frac{3}{2n(n+1)}\left\lfloor\frac{n-1}{2}\right\rfloor.
\end{displaymath}

Now, by Remark \ref{imporemark} one has $p=p(n+1)\geq \frac{2}{3}$ (because $n\geq 4$), which implies 
\begin{displaymath}
    \frac{1}{n(n+1)}\left(1+\left\lfloor\frac{n-1}{2}\right\rfloor\right)\cdot p>\frac{2}{3n(n+1)}\left\lfloor\frac{n-1}{2}\right\rfloor+\frac{2}{3n(n+1)}. 
\end{displaymath}
Furthermore, 
\begin{eqnarray*}
\frac{1}{n(n+1)}\left(1+\left\lfloor\frac{n-1}{2}\right\rfloor\right)E_1'(n+1)&>&\frac{1}{2(n+1)}\left(\frac{5}{3n}\left\lfloor\frac{n-1}{2}\right\rfloor\right)\\
&=&\frac{5}{6n(n+1)}\left\lfloor\frac{n-1}{2}\right\rfloor. 
\end{eqnarray*}

Besides, since $(n-1)^2\geq (n-1)\geq 2\left\lfloor\frac{n-1}{2}\right\rfloor$, we obtain 
\begin{eqnarray*}
\frac{1}{n(n+1)}\left(1+\left\lfloor\frac{n-1}{2}\right\rfloor\right)E_2'(n+1)&\geq& \frac{1}{2n^2(n+1)}\left\lfloor\frac{n-1}{2}\right\rfloor\\
&&+\frac{1}{2(n+1)}\left(\frac{4}{3n^2}\left\lfloor\frac{n-1}{2}\right\rfloor\right)\\
&=&\frac{2}{3n^2(n+1)}\left\lfloor\frac{n-1}{2}\right\rfloor.  
\end{eqnarray*}

Finally, since $p<1$, 
\begin{displaymath}
    p+E_1'(n+1)+E_2'(n+1)< 3+\frac{1}{n^2}+\frac{1}{n^2}\left\lfloor\frac{n-1}{2}\right\rfloor. 
\end{displaymath}

Therefore, by induction hypothesis and the above estimations, 
\begin{eqnarray*}
    S(n+1)+3c_2(n+1)&\geq& S(n)+3c_2(n)-\frac{3}{n(n+1)}\left\lfloor\frac{n-1}{2}\right\rfloor+\frac{3}{n+1}\\
    &&+\frac{1}{n^2}(c_1-c_2)(n)\left\lfloor\frac{n-1}{2}\right\rfloor\\
    &&+\frac{1}{n(n+1)}\left(1+\left\lfloor\frac{n-1}{2}\right\rfloor\right)\cdot p\\
    &&+\frac{1}{n(n+1)}\left(1+\left\lfloor\frac{n-1}{2}\right\rfloor\right)(E_1'(n+1)+E'_2(n+1))\\
    &&-\frac{1}{n+1}(p+E_1'(n+1)+E_2'(n+1))\\
    &>&-\left(\frac{3}{n(n+1)}+\frac{1}{n^2(n+1)}\right)\left\lfloor\frac{n-1}{2}\right\rfloor\\
    &&+\left(\frac{3}{n(n+1)}+\frac{7}{6n^2(n+1)}\right)\left\lfloor\frac{n-1}{2}\right\rfloor>0.
\end{eqnarray*}

This proves the result. 
\end{proof}

\begin{remark}
    From the proof of above proposition, we see that 
    \begin{displaymath}
        J(t)>-\frac{1}{d^2}\left(c_2+\frac{1}{\tau_2}\right)\log t=\frac{1}{d^2}\left(1-\frac{1}{\tau_2}\left(\left\lfloor\frac{n-1}{2}\right\rfloor+1\right)\right)\log t>0.
    \end{displaymath}
\end{remark}

\subsection{Non-uniform hyperbolicity.}

Now, we are ready to prove Theorem \ref{mainteo}. The proof of this result is a consequence of Key Lemmas and the the argument presented in the proof of \cite[Theorem A]{ACS}.  

\begin{proof}[proof of Theorem \ref{mainteo}]
Define for $i=0,...,n-1$  $$J_i=J_i(\overline{x},u)=\displaystyle\sum_{\overline{y}\in f_{t_0}^{-3i}(\overline{x})}\frac{1}{\det(Df_{t_0}^{3i}(\overline{y}))}I(\overline{y},F_{t_0}^{-3i}(\overline{y})u;f^3).$$

Notice that if $\mathcal{G}_n$ and $\mathcal{B}_n$ as in the previous section, one has
\begin{displaymath}
    J_i=\frac{1}{d^{3i}}\sum_{(y,w)\in\mathcal{G}_i}I(y,w;f^3)+\frac{1}{d^{3i}}\sum_{(y,w)\in\mathcal{G}_i}I(y,w;f^3).
\end{displaymath}

Let $E\in\mathcal{H}_{NH}(\mathbb{T}^2)$ with elementary divisors $(\tau_1,\tau_2)$, $\tau_2\geq 3$. By Proposition 2.1, Lemma 2.5 and Remark 2.4, we have for $t>\frac{2\alpha}{a}$ satisfying the conditions of Lemma \ref{L2} that 
\begin{eqnarray*}
    \lim_{i\to\infty} J_i&\geq & p(\tau_2)I_1(t)+(1-p(\tau_2))I_2(t)+C(t) \\
    &=& J(t)+C(t)\\
    &\geq &\frac{1}{d^2}\left(1-\frac{1}{\tau_2}\left(\left\lfloor\frac{n-1}{2}\right\rfloor+1\right)\right)\log t+C(t),
\end{eqnarray*}
where $C(t)>C\in\mathbb{R}$. Hence, for every non-zero tangent vector $(x,v)$, a large enough $t>0$ and every $i\geq i_0\in\mathbb{N}$, we have $J_i(\overline{x},u)\geq 3c'>0$. Then, by Lemma \ref{I} we have, for $n_0\in\mathbb{N}$ large enough, that 
\begin{displaymath}
    \frac{1}{3n_0}I(\overline{x},u;f_t^{3n_0})=\frac{1}{3n_0}\sum_{i=0}^{n_0-1}J_i(x,v)>\frac{c'}{2}>0,\quad \forall(\overline{x},u)\in T^1\mathbb{T}^2,v\neq 0. 
\end{displaymath}
Therefore, $C_{\chi}(f)>0$. So, by \cite[Proposition 2.2]{ACS}, it follows that $f$ is NUH.
\end{proof}

\section{Proof of Theorem B}

In this section, we will prove Theorem B. The proof of stable ergodicity in \cite{ACS} relies on two main arguments:
\begin{itemize}
    \item Hopf argument: They proved that the stable manifold for almost every point in $\mathbb{T}^2$ has large diameter in order to ensure intersections between stable and unstable manifolds. In particular, this shows that the ergodic components of $\mu$ are open modulo zero sets.
    \item A criterion of transitivity for area-preserving endomorphisms on $\mathbb{T}^2$: If a linear map on $\mathbb{T}^2$ of degree at least two has no real eigenvalues of modulo one, then its whole homotopy class of area-preserving endomorphsisms consists entirely of transitive elements (see \cite[Theorem 2.2]{A}).    
\end{itemize}
The first step of above argument is guaranteed by the NUH property and some properties of the map $f_t$, for $t>0$ large enough. However, for non-hyperbolic matrices $E$, the transitivity criteria can no longer be applicable. So, in our case, to prove stable ergodicity for $\mu$, some additional work is necessary. 

\subsection{Preliminaries}
Recall that for a $C^1$ curve $\gamma:I\subset\mathbb{R}\to\mathbb{T}^2$, whose coordinates are $\gamma(t)=(\gamma_1(t),\gamma_2(t))$, its \textit{length in the maximum} norm is given by 
\begin{displaymath}
\ell_m(\gamma)=\int_{I}\max\lbrace\vert\gamma_1'(t)\vert,\vert\gamma_2'(t)\vert\rbrace dt. 
\end{displaymath}
Note that if $\ell_e(\gamma)$ denotes the euclidean length of $\gamma$, one has
\begin{displaymath}
    \ell_m(\gamma)\leq \ell_e(\gamma)\leq\sqrt{2}\ell_m(\gamma).
\end{displaymath}
From \cite{ACS}, we called $v$-segment to a $C^1$ curve $\gamma$ which is tangent to the vertical cone $\Delta_{\alpha}^v$ and  $\ell=\ell_m(\gamma)=\frac{\alpha}{5e_v}$, where $e_v=\inf_{u\in\Delta_{\alpha}^v, \Vert u\Vert=1}\Vert E^{-1}u\Vert$. In what follows we take $\alpha>1$ such that $\ell>1$. 
\begin{remark}\label{len1}
In \cite{ACS} was observed that the length of the projection on the vertical axis of a $v$-segment $\gamma$ is exactly $\ell$.  In this case we say that $\gamma$ \textit{crosses} $\mathbb{T}^2$ \textit{vertically}.   
\end{remark}
\noindent From above remark, we say that a $C^1$ curve $\gamma'$ \textit{crosses} $\mathbb{T}^2$ \textit{horizontally}if it is tangent to the horizontal cone and its projection on the horizontal axis has size bigger than 1.

Now, for a endomorphism $f:\mathbb{T}^2\to\mathbb{T}^2$, the \textit{natural extension} or \textit{space of pre-orbits of }$f$ is defined as 
\begin{displaymath}
    L_f:=\lbrace \hat{x}=(\overline{x}_0,\overline{x}_1,\overline{x}_2\ldots)\in(\mathbb{T}^2)^{\mathbb{Z}_+} : f(\overline{x}_{i+1})=\overline{x}_i,\forall i\geq 0\rbrace, 
\end{displaymath}
endowed with the product topology. Let $\pi_{ext}:L_f\to\mathbb{T}^2$ be the projection onto the first coordinate, i.e., $\pi_{ext}(\hat{x})=\overline{x}_0$ for every $\hat{x}\in L_f$. Define $\hat{f}:L_f\to L_f$ by 
\begin{displaymath}
    \hat{f}(\hat{x})=(f(\overline{x}_0),\overline{x}_0,\overline{x}_1,\ldots),\quad\hat{x}\in L_f. 
\end{displaymath}
It is easy to check that $\hat{f}$ is a homeomorphism and $\pi_{ext}\circ\hat{f}=f\circ\pi_{ext}$. Besides, it is well known that for any invariant measure $\nu$ for $f$, there is a unique invariant measure $\hat{\nu}$ for $\hat{f}$ such that $(\pi_{ext})_*\hat{\nu}=\nu$. The measure $\hat{\nu}$ is called the \textit{lift} of $\nu$.   

On the other hand, the action of $\pi_{ext}$ on $L_f$ allows us to define a tangent bundle on $L_f$: For $\hat{x}=(\overline{x}_0,\overline{x}_1,\ldots)\in L_f$, let us consider 
\begin{displaymath}
    T_{\hat{x}}L_f=T_{\pi_{ext}(\hat{x})}\mathbb{T}^2=T_{\overline{x}_0}\mathbb{T}^2. 
\end{displaymath}
In the same way, the derivative $Df$ of $f$ lifts to a map $D\hat{f}$ on $T_{\hat{x}}L_f$ in a natural way as $D\hat{f}(\hat{x})=Df(\overline{x}_0)$. Moreover, for every $\hat{x}\in L_f$, 
\begin{displaymath}
D\hat{f}^n(\hat{x})=
\begin{cases}
D\hat{f}(\hat{f}^{n-1}(\hat{x}))\circ \ldots\circ D\hat{f}(\hat{x})=Df^n(\overline{x}_0)  &  \text{if  }n>0\\
Id  &  \text{if  }n=0\\
(D\hat{f}(\hat{f}^{n}(\hat{x})))^{-1}\circ\ldots\circ (D\hat{f}(\hat{f}^{-1}\hat{x}))^{-1}=(Df^n(\overline{x}_n))^{-1}   &  \text{if  }n<0.
\end{cases}
\end{displaymath} 

%Following the above, an endomorphism $f$ admits a nontrivial \textit{dominated splitting} if the tangent bundle splits into two subbundles $TL_f=E\oplus F$ such that
%\begin{enumerate}
%\item[(i)] $E_{\hat x}$ and $F_{\hat x}$ are invariant by $D\hat f(\hat x)$.
%\item[(ii)] The subbundles $E$ and $F$ are continuous, i.e., $E_{\hat x}$ and $F_{\hat x}$ vary continuously with $\hat x\in L_f$.
%\item[(iii)] $\angle(E_{\hat{x}},F_{\hat{x}})\geq \rho>0$ for every $\hat{x}\in L_f$.
%\item[(iv)] There are constants $c>0$ and $\lambda\in(0,1)$ such that for any $\hat{x}\in L_f$, $$\Vert D\hat f^n(\hat{x})v_E\Vert\leq c\lambda^n\Vert D\hat f^n(\hat{x})v_F\Vert,$$
%for all $ v_E\in E_{\hat{x}}, v_F\in F_{\hat{x}}, \Vert v_E\Vert=\Vert v_F\Vert=1$ and for all $n\geq 1.$
%\end{enumerate}

On the other hand, the relation of the Lyapunov exponents for $f$ and $\hat{f}$ is as follows: 
\begin{eqnarray*}
    \chi_{\hat{f}}(\hat{x},v)&=&\chi_f(\pi_{ext}(\hat{x}),v),\\
    \chi_{\hat{f}}^+(\hat{x})&=&\chi_f^+(\pi_{ext}(\hat{x})),\\
    \chi_{\hat{f}}^-(\hat{x})&=&\chi_f^-(\pi_{ext}(\hat{x})).
\end{eqnarray*}
By \cite[Lemma 2.1]{ACS}, there is a completely invariant set $\hat{\mathcal{R}}\subset L_f$ of full $\hat{\mu}$-measure such that 
\begin{equation}\label{relaxp}
    \chi_{\hat{f}}^+(\hat{x})=-\chi_{\hat{f}^{-1}}^-(\hat{x})=\chi_{f}^+(\overline{x}_0)\quad\text{and}\quad\chi_{\hat{f}}^-(\hat{x})=-\chi_{\hat{f}^{-1}}^+(\hat{x})=\chi_{f}^-(\overline{x}_0),
\end{equation}
where $\overline{x}_0=\pi_{ext}(\hat{x})$, for every $\hat{x}\in\hat{\mathcal{R}}$. Besides, there is a completely invariant set $\mathcal{R}\subset\pi_{ext}(\hat{\mathcal{R}})$ with $\mu(\mathcal{R})=1$ satisfying $\hat{\mu}_x(\hat{\mathcal{R}})=1$ for every $\hat{x}\in\mathcal{R}$, where $\hat{\mu}_x$ is the unique measure on $\pi^{-1}_{ext}(\overline{x})$ satisfying
\begin{displaymath}
\hat{\mu}_x(\lbrace(\xi_0,\xi_1,\ldots)\in\pi^{-1}_{ext}(\overline{x}) : \xi_i=\overline{x}_i\rbrace)=\vert\det Df^i(\overline{x}_i)\vert^{-1}.  
\end{displaymath}
Moreover, for any $\hat{x}\in\hat{\mathcal{R}}$ there are subspaces $E_{\hat{x}}^\pm$ of $\mathbb{R}^2$ such that
\begin{displaymath}
\chi_{\hat{f}}^{\pm}(\hat{x})=\chi_{\hat{f}}(\hat{x},v)=-\chi_{\hat{f}^{-1}}(\hat{x},v)=-\chi^{\mp}_{\hat{f}^{-1}}(\hat{x}),\quad\forall v\in E_{\hat{x}}^{\pm}\setminus\lbrace 0\rbrace. 
\end{displaymath}

Recall the definition of Pesin stable and unstable manifolds given in \cite{ACS}. Let $\hat{x}\in\hat{\mathcal{R}}$, $\pi_{ext}(\hat{x})=\overline{x}$. Unlike the invertible case, the notion of unstable manifold depends on the pre-orbit of a point $\overline{x}\in\mathbb{T}^2$. More precisely, the \textit{local unstable manifold} at $\hat{x}$ is a $C^1$ curve defined by 
\begin{displaymath}
W_{loc}^u(\hat{x})=\left\lbrace \overline{y}\in\mathbb{T}^2
 : \exists !\;\hat{y}\in L_f,\pi_{ext}(\hat{y})=\overline{y},\begin{tabular}{l} $d(x_n,y_n)\leq C_1e^{-n\varepsilon}$ \\
 \;\;\;\;\;\;\;\;\;\;\;\;\;\;\text{and}\\
$d(x_n,y_n)\leq C_2e^{-n\lambda}$ \end{tabular},\forall n\geq0\right\rbrace,
\end{displaymath}
for some constants $\lambda>0$, $0<\varepsilon<\frac{\lambda}{200}$ and $0<C_1\leq 1< C_2$. We denote by $\hat{W}^u_{loc}(\hat{x})$ the lift of $W^u_{loc}(\hat{x})$ to $L_f$ under $\pi_{ext}$. Since the unstable manifold $W^u_{loc}(\hat{x})$ depends on the pre-orbit of $\overline{x}$, there is a bouquet of these manifolds passing through $\overline{x}$. The \textit{unstable manifold} of $f$ at $\hat{x}$ is given by 
\begin{displaymath}
    W^u(\hat{x})=\left\lbrace y_0=\pi_{ext}(\hat{y}) : \limsup_{n\to\infty}\frac{1}{n}\log d(x_n,y_n)<0\right\rbrace, 
\end{displaymath}
and its corresponding lift we denote by $\hat{W}^u(\hat{x})$.
On the other hand, the \textit{local stable manifold} at $\overline{x}$, denoted by $W^s(\overline{x})$, is  a $C^1$ curve defined by  
\begin{displaymath}
W_{loc}^s(\overline{x})=\left\lbrace \overline{y}\in\mathbb{T}^2
 : \begin{tabular}{l} $d(f^n(x),f^n(y))\leq C_1e^{-n\varepsilon}$ \\
 \;\;\;\;\;\;\;\;\;\;\;\;\;\;\text{and}\\
$d(f^n(x),f^n(y))\leq C_2e^{-n\lambda}$ \end{tabular},\forall n\geq0\right\rbrace,
\end{displaymath}
for some constants $\lambda>0$, $0<\varepsilon<\frac{\lambda}{200}$ and $0<C_1\leq 1< C_2$. We denote by $\hat{W}^s_{loc}(\hat{x})$ the lift of $W^s_{loc}(\overline{x})$ to $L_f$ under $\pi_{ext}$. The \textit{stable manifold} of $f$ at $\overline{x}$ is given by 
\begin{displaymath}
    W^s(\overline{x})=\pi_{ext}\left(\bigcup_{n=0}^{\infty}\hat{f}^{-n}(\hat{W}^s_{loc}(\hat{f}^n(\hat{x})))\right). 
\end{displaymath}

According to \cite[Proposition 2.3]{QZ}, there is an increasing countable family $\lbrace\hat{\Lambda}_k\rbrace_{k\geq 0}$ of compact subsets of $\hat{\mathcal{R}}$ such that $\hat{\mu}(\bigcup_{k=0}^{\infty}\hat{\Lambda}_k)=1$, satisfying the following properties: 
\begin{enumerate}
    \item $W^u(\cdot)$ is continuous on $\hat{\Lambda}_k$, and $T_{\overline{x}_0}W^u_{loc}(\hat{x})=E^+(\hat{x})$ for any $\hat{x}\in\hat{\Lambda}_k$, $\pi_{ext}(\hat{x})=\overline{x}_0$, for every $k\geq 0$. Moreover, for every $\hat{x}\in\hat{\Lambda}_k$ there is a sequence of $C^1$ curves $\lbrace W^u(\hat{x},-n)\rbrace_{n\geq0}$ in $\mathbb{T}^2$ such that 
    \begin{itemize}
        \item $W^u(\hat{x},0)=W^u_{loc}(\hat{x})$,
        \item $W^u(\hat{x},-n+1)=f(W^u(\hat{x},n))$, for every $n\geq 1$, 
        \item $W^u(\hat{x})=\bigcup_{n=0}^{\infty}f(W^u_{loc}(\hat{x}),-n)$.
    \end{itemize}
    \item If $\Lambda_k=\pi_{ext}(\hat{\Lambda}_k)$, then $W^s(\cdot)$ is continuous on $\Lambda_k$. Besides, 
    \begin{itemize}
        \item $T_{\overline{x}_0}W^s_{loc}(\overline{x}_0)=E^-(\hat{x})$. Furthermore, $E^-(\cdot)$ is continuous on $\Lambda_k$. 
        \item $f(W^s_{loc}(\overline{x}_0))\subset W^s_{loc}(f(\overline{x}_0))$. 
    \end{itemize}
\end{enumerate}
The sets $\hat{\Lambda}_k$ and $\Lambda_k$ are the so-called \textit{Pesin blocks} for the systems $(\hat{f},\hat{\mu})$ and $(f,\mu)$ respectively. 

Note that the manifolds $W^s$ ($\hat{W}^s$) form an invariant lamination of $\mathbb{T}^2$ ($L_f$), but in general the manifolds $W^u$ do not form an invariant lamination because different elements of this family may be intersect. However,  the manifolds $\hat{W}^u$ do form an invariant lamination of $L_f$. Besides, in \cite{L} and \cite{LS} were proved absolute continuity properties for the laminations $W^s$ and $\hat{W}^u$ respectively. We state the results for the sake of completeness. 
\begin{lemma}\label{conabsstable}
Given any Pesin block $\Lambda_k$, the holonomy of local stable manifolds of points in $\Lambda_k$ between any two transversals is absolutely continuous w.r.t. the Lebesgue measure of the two transversals.     
\end{lemma}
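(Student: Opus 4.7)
The plan is to adapt Pesin's classical absolute continuity theorem for stable foliations to the endomorphism setting, since on any Pesin block $\Lambda_k$ the relevant dynamics behaves ``as if it were invertible on the forward orbit''. The starting observation is that the stable manifold $W^s(\overline{x})$ depends only on the forward orbit of $\overline{x}$, not on any choice of pre-orbit, so the analysis reduces to studying $f$ restricted to the forward-invariant set $\Lambda_k$, on which all the Pesin estimates are uniform.

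First I would fix the Pesin block $\Lambda_k$ and introduce Pesin charts around each $\overline{x}\in\Lambda_k$ via the Lyapunov (adapted) metric, in which $Df$ restricted to $E^-(\hat{x})$ is uniformly contracting with rate bounded away from $1$, and in which $E^-$ varies uniformly continuously. In these charts the local stable manifolds $W^s_{loc}(\overline{x})$ become graphs of $C^1$ functions whose Lipschitz constants and $C^1$ norms are uniformly bounded over $\Lambda_k$, producing a continuous lamination of a neighborhood of $\Lambda_k\cap\text{chart}$ by $C^1$ leaves.

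Next, given two transversals $T_1$ and $T_2$ to this lamination, the stable holonomy $h^s:T_1\cap\bigcup_{\overline{x}\in\Lambda_k}W^s_{loc}(\overline{x})\to T_2$ would be approximated by its $n$-th forward push-forward: for each $n$, define $h^s_n=f^{-n}\circ h^s_{f^n(T_1),f^n(T_2)}\circ f^n$, where the intermediate holonomy is taken between the transversals $f^n(T_1)$ and $f^n(T_2)$, which are pushed exponentially close together along stable leaves. Because $f^n$ is a local diffeomorphism on each relevant small piece and because points stay in $\Lambda_k$-tubes where distortion of $Df$ along stable leaves is controlled (by the standard Pesin argument using summability of $\log\|Df\|$ along the orbit and the bounded angle condition $\angle(E^-,E^+)\geq\rho_k$), the Jacobians $\mathrm{Jac}(h^s_n)$ are uniformly bounded above and below by constants $C_k^{\pm 1}$. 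Passing to the limit $n\to\infty$, $h^s_n\to h^s$ and the Radon--Nikodym derivative of $h^s$ between the two transversal measures is bounded by $C_k$, giving absolute continuity.

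The main obstacle is the distortion estimate, which in the non-invertible setting is delicate because one must work with the forward orbit in $\Lambda_k$ and check that the graph transform produces stable leaves whose tangent spaces $E^-$ vary H\"older continuously over the block, uniformly in $k$. Once this H\"older continuity is in place, the Jacobian comparison is the usual telescoping product $\prod_{i=0}^{n-1}\bigl(\|Df(f^i(\overline{x}))|_{E^-}\|/\|Df(f^i(\overline{y}))|_{E^-}\|\bigr)$ for $\overline{y}=h^s(\overline{x})$, which converges because $d(f^i(\overline{x}),f^i(\overline{y}))$ decays exponentially fast along the stable leaf. This is exactly the content of the stable-manifold absolute continuity theorem proved in \cite{L}, so I would invoke that reference for the final quantitative estimate.
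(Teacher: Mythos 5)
The paper does not actually prove this lemma: it is stated ``for the sake of completeness'' as a known result and attributed to \cite{L}, which is exactly where your argument also terminates, so your proposal takes essentially the same approach (with the added value of sketching the underlying Pesin-type holonomy argument and correctly noting that stable manifolds, unlike unstable ones, depend only on forward orbits and hence pose no extra difficulty for endomorphisms). One small inaccuracy in your sketch: the Jacobian of the stable holonomy is the limit of ratios of Jacobians of $f^n$ restricted to the \emph{transversals} (composed with the nearly isometric connecting map between $f^n(T_1)$ and $f^n(T_2)$), not the telescoping product of $\Vert Df|_{E^-}\Vert$ along the stable leaves --- but since you defer the quantitative estimate to \cite{L}, this does not affect the conclusion.
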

\begin{lemma}\label{conabsunstable}
Given any partition of $L_f$ subordinated to the Pesin unstable lamination $\hat{W}^u$, the disintegrations $\hat{m}_u$ along the elements of the partition are absolutely continuous w.r.t. the Lebesgue measure on the unstable manifolds.
\end{lemma}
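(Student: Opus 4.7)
The plan is to reduce the statement to the classical invertible setting on the natural extension $L_f$, since the map $\hat{f}$ is a homeomorphism and the lift $\hat{\mu}$ is $\hat{f}$-invariant. By the relations in \eqref{relaxp}, the splitting $E^+\oplus E^-$ over $\hat{\mathcal{R}}$ produces a genuine Pesin hyperbolic structure for the invertible system $(\hat{f},\hat{\mu})$, and the lamination $\hat{W}^u$ (whose leaves are graphs over $E^+$ in appropriate Lyapunov charts) is the Pesin unstable lamination in the classical sense. With this setup in place, I would apply the Ledrappier–Young / Brin–Pesin absolute continuity machinery exactly as adapted to the non-invertible setting by Liu–Shu \cite{LS}.

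The key steps, in order, are as follows. First, fix a Pesin block $\hat{\Lambda}_k$ and choose a measurable partition $\xi$ subordinated to $\hat{W}^u$: each atom of $\xi$ is contained in an unstable leaf, contains a neighborhood of its reference point inside the leaf, and $\hat{f}(\xi)\prec\xi$. Such a $\xi$ is constructed in the standard way by starting from a small Pesin chart rectangle and refining by forward iterates. Second, apply Rokhlin's disintegration theorem to obtain conditional measures $\hat{\mu}_{\xi(\hat{x})}$ supported on the unstable plaque through $\hat{x}$. Third, to prove $\hat{\mu}_{\xi(\hat{x})}\ll\mathrm{Leb}_{W^u(\hat{x})}$, introduce the holonomy between two nearby unstable plaques through stable manifolds of $\hat{f}^{-1}$ (equivalently, along the stable lamination of $\hat{f}$), whose absolute continuity on transversals mimics Lemma \ref{conabsstable}; this is available because stable manifolds of $\hat{f}^{-1}$ are well-defined on $L_f$. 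Fourth, compute the Jacobian of the unstable holonomy as the telescoping infinite product
\begin{equation*}
\Delta(\hat{x},\hat{y})=\prod_{n=0}^{\infty}\frac{|J^u\hat{f}(\hat{f}^{-n-1}(\hat{x}))|}{|J^u\hat{f}(\hat{f}^{-n-1}(\hat{y}))|},
\end{equation*}
and check it converges uniformly on $\hat{\Lambda}_k$ using the Hölder continuity of $E^+$ and the uniform hyperbolic constants available on the Pesin block. Fifth, combine this with the invariance of $\hat{\mu}$ and the $\hat{f}$–equivariance of the partition $\xi$ to transport the Lebesgue-class conditionals forward and identify the densities of $\hat{\mu}_{\xi(\hat{x})}$ as (proportional to) $\Delta(\hat{x},\cdot)$ times Lebesgue.

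The principal obstacle I anticipate is that $L_f$ is not a manifold and the unstable leaves live in $\mathbb{T}^2$ rather than in $L_f$, so one has to be careful to distinguish between the lamination $\hat{W}^u$ in $L_f$ and its projection, which typically fails to be a foliation on $\mathbb{T}^2$. In particular, the holonomy must be defined between lifts $\hat{W}^u_{loc}(\hat{x})$ and $\hat{W}^u_{loc}(\hat{y})$ indexed by pre-orbits, not between unstable curves in $\mathbb{T}^2$; otherwise the Jacobian formula above would fail. Once this bookkeeping is correctly set up, the convergence of the Jacobian product and the absolute continuity conclusion follow verbatim from the classical arguments, as carried out in \cite{LS}.
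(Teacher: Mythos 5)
The paper does not actually prove this lemma: it is stated ``for the sake of completeness'' as a result imported from Liu--Shu \cite{LS}, so there is no in-paper argument to compare your proposal against. Judged on its own, your outline follows the classical Pesin-theoretic route (subordinate partition, Rokhlin disintegration, holonomy, infinite-product density), which is indeed the right family of ideas, but two points need repair before it would constitute a proof.

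First, step four conflates two different objects. The displayed product
$\Delta(\hat{x},\hat{y})=\prod_{n\ge 0}|J^u\hat{f}(\hat{f}^{-n-1}(\hat{x}))|\,/\,|J^u\hat{f}(\hat{f}^{-n-1}(\hat{y}))|$
is the candidate \emph{density of the conditional measure} along an unstable plaque; it converges precisely because $\hat{f}^{-n}(\hat{x})$ and $\hat{f}^{-n}(\hat{y})$ approach each other exponentially when $\hat{y}\in \hat{W}^u(\hat{x})$. It is \emph{not} the Jacobian of the holonomy along stable manifolds between two unstable plaques: for holonomy-related points one has $\hat{y}\in \hat{W}^s(\hat{x})$, the backward iterates separate, and the relevant Jacobian is a product over \emph{forward} iterates of stable/transversal Jacobians (this is the content of Lemma \ref{conabsstable}, already available). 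As written, the product you propose to ``check converges uniformly'' would diverge in the role you assign to it. Second, the step that actually turns transverse absolute continuity into leafwise absolute continuity of the disintegration of $\hat{\mu}$ --- a local product (Fubini) decomposition of $\hat{\mu}$ near a Pesin block of $L_f$ into (unstable plaque) $\times$ (stable transversal) $\times$ (fiber of $\pi_{ext}$) --- is asserted but never carried out, and it is exactly where the non-invertibility bites: one must track the fiber conditionals $\hat{\mu}_x$ (with weights $|\det Df^i(\overline{x}_i)|^{-1}$) alongside the holonomies, since $\hat{\mu}$ is not a smooth measure on the non-manifold $L_f$. You correctly flag this obstacle in your final paragraph but do not resolve it. Note also that \cite{LS} circumvents much of this by proving the equivalence of the SRB property of $\hat{\mu}$ with Pesin's entropy formula, which for a volume-preserving local diffeomorphism follows from the folding-entropy identity of \cite{L}; citing that characterization is the shortest rigorous route to the lemma.
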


\subsection{Dynamical and ergodic properties for $f_t$}

Let consider a linear map $E$ having $\pm1$ as an eigenvalue. In this case, the remainder eigenvalue is $m=\det E$, because the determinant of a linear map is the product of its eigenvalues. In particular, $E$ is diagonalizable and, up to a linear change of coordinates, it is written as 
\begin{displaymath}
E=
\left( \begin{array}{cc}
m & k(m-1)\\
0 & 1
\end{array} \right), \quad m>2,
\end{displaymath}
where $k\in\mathbb{N}$ is chosen in a such way that $E$ no fixes $e_2$. Moreover,  
\begin{displaymath}
    E^{-1}(\overline{x})=\overline{y}+\left\lbrace \left(\frac{i}{m},0\right) 
 : i=0,\ldots m-1\right\rbrace,\quad\forall \overline{x}\in\mathbb{T}^2,
\end{displaymath}
where $y$ is the unique point in $\mathbb{R}^2$ satisfying $Ey=x$. Moreover, by Theorem 1 and Theorem 2 in \cite{R}, it follows that the elementary divisors of $E$ are $\tau_1=1$ and $\tau_2=m$. Since $m\geq 3$, by Theorem \ref{mainteo} there are NUH elements in its homotopy class. Furthermore, those elements have the form $f_t=E\circ h_t$, $t>0$, where $h_t(\overline{x})=(x,y+ts(x))$, for every $\overline{x}=(x,y)\in\mathbb{T}^2$ and $s:\mathbb{T}^1\to\mathbb{R}$ is an analytic function with constant derivatives on $\mathcal{G}$. In addition, if we set  $\delta=2t^{-\frac{3}{10}}$ in the definition of critical region, we have for $t>0$ large enough that  
\begin{equation}\label{estimacriticalzone}
k(m-1)\vert s'(x)\vert<4t^{-\frac{3}{10}},\forall\overline{x}\in\mathcal{C}\quad\text{ and }\quad\vert s'(x)\vert\geq \frac{t^{-\frac{3}{10}}}{2},\quad\forall \overline{x}\in\mathcal{C}\setminus\mathcal{C}_{\delta},
\end{equation}
%\hl{No entiendo c\'omo salen las desigualdades anteriores. Dice esto: (?) puedo tomar $\delta$ de alguna forma (dependiendo de $t$) y existe un $t$ tal que a partir de \'el, para todos los $t$ mayores se cumple la cota independiente del $m$ (que puede ser enorme) del $k$ y de la derivada de $s$, que es cercana a cero, pero fija} 
where $\mathcal{C}_{\delta}=\left(\bigcup_{j=0}^{\tau_2}\left[\frac{j}{\tau_2+1}-\frac{\delta}{2},\frac{j}{\tau_2+1}+\frac{\delta}{2}\right]\right)\times\mathbb{T}^1$. Similar estimations were given in \cite{Oba2018} for the standard map. In coordinates, $Df_t=E\circ Dh_t$ can be written as 
\begin{equation}\label{definohiperbolico}
    Df_t(\overline{x})=
\left( \begin{array}{cc}
m+k(m-1)ts'(x) & k(m-1)\\
ts'(x) & 1
\end{array} \right),\quad\forall \overline{x}=(x,y)\in\mathbb{T}^2. 
\end{equation}
Note that $\det\vert Df_t(\overline{x})\vert=m$, for every $\overline{x}\in\mathbb{T}^2$. In addition, there is $C>1$ such that for any $t>0$ and $\overline{x}\in\mathbb{T}^2$,
\begin{equation}\label{estifaderivadaendo}
    (Ct)^{-1}\leq min(Df_t(\overline{x}))\leq\Vert Df_t(\overline{x})\Vert\leq Ct\quad\text{and}\quad\Vert D^2f_t(\overline{x})\Vert\leq C^2t, 
\end{equation}
where $D^2g$ denotes the Hessian of $g$. 

Let consider $\delta_0\in(0,1)$ and $m_0\in\mathbb{N}$ as in \eqref{estimatet} and \eqref{n}. The next two lemmas establish the existence of unstable manifolds of large size with good estimates for the tangent directions on a positive Lebesgue measure subset of $\mathbb{T}^2$. Their proofs follow the ideas presented in \cite{Oba2018}, which, in turn, build upon the proof of \cite[Theorem 5]{CP18}, requiring an adaptation of the Pliss Lemma. It should be noted that, although the author in \cite{Oba2018} deals with a partially hyperbolic system on $\mathbb{T}^4$, the key tool he utilizes in his argument for the existence of large unstable manifolds is the estimation of bounds for the central Lyapunov exponents, as presented in Proposition 1.4 of the aforementioned reference. In that setup, the central 2-dimensional dynamics involve a Standard map, and the bounds on the exponents are analogous to those given in \cite[Remark 3.6]{ACS}. To adapt these previous ideas to our context, the choice of a large $m_0$ in \eqref{n} and the use of the dynamics $\hat{f}$ on $L_f$ become essential. %Moreover, we make the necessary adaptations by resorting to the inverse limit space to obtain the precise estimates that we will employ. This choice of $m_0$ is crucial to ensure the validity of the arguments, particularly in light of the bounds on the Lyapunov exponents.

\begin{lemma}\label{Oba18}
Let $\delta_0\in(0,1)$ small enough. Let $E$ be a linear map  whose elementary divisors are $(1,m)$, where $m$ satisfies $\eqref{n}$, and let $f_t=E\circ h_t$ as above.
Then, for $t$ large enough there is a set $Z_t\subset\mathbb{T}^2$ of positive Lebesgue measure such that for every $\overline{x}\in Z_t$ there exists a $C^1$ curve $W_r^+(\overline{x})$ whose length are bounded bellow by $r=t^{-7}$. Moreover, 
\begin{displaymath}
    T_{\overline{y}}W_{r}^+(\overline{x})\subset\Delta^h_{\frac{4}{\theta_1}}\quad\forall\overline{y}\in W_{r}^+, 
\end{displaymath}
where $\theta_1=\theta_1(t)\in(0,1)$. 
\end{lemma}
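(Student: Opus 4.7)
The plan is to adapt the Pliss Lemma strategy of \cite{Oba2018,CP18} to the non-invertible setting by working in the inverse limit $L_{f_t}$, producing a positive-measure set of pre-orbits admitting infinitely many backward hyperbolic times, and then building the local unstable curves via a Hadamard--Perron graph transform. First I would invoke Theorem A: the map $f_t$ is NUH, so $\chi^+(\overline{x})>0$ for $\mu$-a.e.\ $\overline{x}\in\mathbb{T}^2$. Lifting via \eqref{relaxp} yields $\chi^+_{\hat{f}_t}(\hat{x})>0$ for $\hat{\mu}$-a.e.\ $\hat{x}\in L_{f_t}$, together with the measurable Oseledets subbundle $E^+(\hat{x})$ expanded by $D\hat{f}_t$.

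Second, I would apply a Pliss-type lemma along pre-orbits. Using the uniform bounds $(Ct)^{-1}\leq m(Df_t)\leq \|Df_t\|\leq Ct$ from \eqref{estifaderivadaendo} and a parameter $\theta_1=\theta_1(t)\in(0,1)$ chosen so that $-\log\theta_1$ is a definite fraction of the average backward expansion rate on $E^+$, the Pliss Lemma produces a positive-$\hat{\mu}$-measure set $\hat{Z}_t\subset L_{f_t}$ and a sequence of ``$\theta_1$-hyperbolic backward times'' $n_k\to\infty$ for which
\[
\frac{1}{n}\sum_{j=0}^{n-1}\log\bigl\|D\hat{f}_t(\hat{f}_t^{-n+j}(\hat{x}))|_{E^+}\bigr\|\geq -\log\theta_1,\qquad n=1,\dots,n_k.
\]
The hypothesis \eqref{n} on $m$ is precisely what forces the Pliss density to remain strictly positive, paralleling the large-degree assumption of \cite{Oba2018}. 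Projecting by $\pi_{ext}$ then yields $Z_t=\pi_{ext}(\hat{Z}_t)\subset\mathbb{T}^2$ of positive Lebesgue measure.

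Third, for $\hat{x}\in\hat{Z}_t$ and a backward hyperbolic time $n_k$, I would construct $W_r^+(\overline{x})$ by a Hadamard--Perron graph transform: start with a small horizontal segment through $\overline{x}_{n_k}$ tangent to $\Delta^h_{4/\theta_1}$, push it forward $n_k$ steps by $f_t$, and use the uniform backward expansion on $E^+$ together with $\|D^2 f_t\|\leq C^2 t$ from \eqref{estifaderivadaendo} to bound the accumulated curvature. The admissible radius is essentially the ratio of accumulated expansion to $C^2$-distortion; balancing the powers of $t$ entering via $Ct$, $C^2 t$ and $\theta_1(t)$ yields $r\geq t^{-7}$ for $t$ large. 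The tangent bound $T_{\overline y}W_r^+(\overline{x})\subset \Delta^h_{4/\theta_1}$ then follows because $\Delta^h_{4/\theta_1}$ is strictly $Df_t$-invariant in the good region (read off \eqref{definohiperbolico}, where the $(1,1)$-entry $m+k(m-1)ts'(x)$ dominates), and the initial segment at $\overline{x}_{n_k}$ is chosen tangent to this cone.

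The main obstacle will be step two: the classical Pliss Lemma is an invertible-map statement, so one must formulate a version along pre-orbits in $L_{f_t}$ while simultaneously controlling all the $t$-dependent constants, since $\|Df_t\|$, $\|D^2 f_t\|$ and the expansion rates all grow with $t$. The hypothesis \eqref{n} is tuned exactly so that the Pliss density remains uniformly bounded below for $t$ large, which is what guarantees $\hat{\mu}(\hat{Z}_t)>0$ and hence $\mu(Z_t)>0$.
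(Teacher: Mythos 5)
Your proposal follows essentially the same route as the paper: lift to the inverse limit, run a Pliss-type argument along pre-orbits to extract a positive-$\hat{\mu}$-measure set of backward hyperbolic times, project by $\pi_{ext}$, and build $W_r^+$ by a graph transform whose admissible radius is controlled by the $C^2$ bounds in \eqref{estifaderivadaendo}; the paper defers exactly these two technical steps to the corresponding arguments of \cite{Oba2018}. The one refinement to note is that the paper does not start from mere positivity of $\chi^+$ but from the quantitative bound $\min\lbrace\chi^+,-\chi^-\rbrace>(1-\delta_0)\log t$ that \eqref{n} supplies via \cite[Remark 3.6]{ACS}; it is this bound, not just non-uniform hyperbolicity, that fixes the explicit rates $t^{-4/5}$, $\theta_1=t^{-2/5}$, $r=t^{-7}$ and the measure estimate $\hat{\mu}(\hat{Z}_t)\geq\frac{1-7\delta_0}{1+7\delta_0}$.
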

\begin{proof}
First, by \cite[Remark 3.6]{ACS}, we see that if $m$ satisfies the condition \eqref{n}, one has 
\begin{displaymath}
\min\lbrace\chi_{f_t}^+(\overline{x}),-\chi_{f_t}^-(\overline{x})\rbrace>(1-\delta_0)\log t,\quad \mu-a.e. \overline{x}\in\mathbb{T}^2.  
\end{displaymath}
Thus, by \eqref{relaxp} there is a full $\hat{\mu}$-measure subset $\hat{\mathcal{R}}\subset L_f$ such that the above relation holds on $L_f$, i.e.,
\begin{equation}\label{minexpo}
\min\lbrace\chi_{\hat{f}_t}^+(\hat{x}),-\chi_{\hat{f}_t}^-(\hat{x})\rbrace>(1-\delta_0)\log t,\quad \hat{\mu}-a.e. \hat{x}\in L_f.   
\end{equation}

Now, since $\hat{\mu}$ is hyperbolic  there are at most countable many ergodic components for $\hat{\mu}$. For any ergodic component $\hat{\nu}_i$ for $\hat{\mu}$ define the following set: 
\begin{displaymath}
    \hat{\Lambda}_i=\left\lbrace \hat{x}\in \hat{\mathcal{R}} : \frac{1}{n}\sum_{j=0}^{n-1}\delta_{\hat{f}_t^{j}(\hat{x})},\frac{1}{n}\sum_{j=0}^{n-1}\delta_{\hat{f}_t^{-j}(\hat{x})}\xrightarrow[n\to\infty]{}\hat{\nu}_i,\text{ in the weak* topology}\right\rbrace,
\end{displaymath}
where $\delta_z$ is the dirac measure on the point $z$. It is easy to check that $\hat\Lambda_i\cap\hat\Lambda_j=\emptyset$ if $i\neq j$. Let 
\begin{displaymath}
    \hat{\Lambda}=\bigcup_{i\in\mathbb{N}}\hat{\Lambda}_i. 
\end{displaymath}
Notice that $\hat{\mu}(\hat{\Lambda})=1$. Define the sets
\begin{eqnarray*}
  %\hat{Z}_{i,t}^-&=&\left\lbrace \hat{x}\in\hat{\mathcal{R}}\cap\hat{\Lambda}_i : \left\Vert D\hat{f}^n(\hat{x})\vert_{E_{\hat{x}}^-}\right\Vert<\left(t^{-\frac{4}{5}}\right)^n,\forall n\geq 0 \right\rbrace, \\
    \hat{Z}_{i,t}^+&=&\left\lbrace \hat{x}\in\hat{\Lambda}_i : \left\Vert D\hat{f}_t^{-n}(\hat{x})\vert_{E_{\hat{x}}^+}\right\Vert<\left(t^{-\frac{4}{5}}\right)^n,\forall n> 0 \right\rbrace,\\
    \hat{Z}_{i,t}&=&%\hat{f}(\hat{Z}_{i,t}^-)\cap
    \hat{f}_t^{-1}(\hat{Z}_{i,t}^+). 
\end{eqnarray*}
In this way, we set 
\begin{displaymath}
    \hat{Z}_t=\bigcup_{i\in\mathbb{N}}\hat{Z}_{i,t}. 
\end{displaymath}

Note that 
$$
1 \leq \Vert D\hat{f_t}^{-1}(\hat{f_t}(\hat{x}))|_{E^+_{\hat{f_t}(\hat{x})}} \Vert \cdot \Vert D\hat{f_t}(\hat{x})|_{E^+_{\hat{x}}} \Vert < t^{\frac{-4}{5}} \Vert D\hat{f_t}(\hat{x})|_{E^+_{\hat{x}}} \Vert,
$$ 
thus we have $\Vert D\hat{f_t}(\hat{x})|_{E^+_{\hat{x}}} \Vert > t^{\frac{4}{5}}$ for all $\hat{x} \in \hat{Z}_t$.

From the definition of $D\hat{f}$ and \eqref{estifaderivadaendo}, we can follow the argument given in pp. 1023-1024 of \cite{Oba2018} step by step to conclude that for sufficiently large $t>0$, 
\begin{displaymath}
    \hat{\mu}(\hat{Z}_t)\geq \frac{1-7\delta_0}{1+7\delta_0}>0.
\end{displaymath}
The proof is omitted here as it is analogous to the one provided by Obata in \cite{Oba2018}.

%From definition of $D\hat{f}$ and \eqref{estifaderivadaendo}, we follow step by step the argument given in pp. 16-17 of \cite{Oba2018} to conclude that for $t>0$ large enough 
%\begin{displaymath}
%    \hat{\mu}(\hat{Z}_t)\geq \frac{1-7\delta_0}{1+7\delta_0}>0. 
%\end{displaymath}

So, since $\hat{Z}_t\subset \pi_{ext}^{-1}(\pi_{ext}(\hat{Z}_t))$, it follows that $Z_t=\pi_{ext}(\hat{Z}_t)$ satisfies 
\begin{displaymath}
    \mu(Z_t)\geq \frac{1-7\delta_0}{1+7\delta_0}>0. 
\end{displaymath}

Now, from definition of $Z_t$ one has that for every $\overline{x}\in Z_t$ there is a pre-orbit $\hat{x}=(\overline{x},\overline{x}_1,\ldots,\overline{x}_n,\ldots)\in\hat{Z}_t$ and a unit vector $v^+$ such that $\pi_{ext}(\hat{x})=\overline{x}$ and  
\begin{displaymath}
    \left(\frac{1}{Ctm^{\frac{1}{2}}}\right)^{2n}\leq\frac{\left\Vert (Df_t^{n}(\overline{x}_{n-1}))^{-1}v^+\right\Vert^2}{\vert(\det Df_t^{n}(\overline{x}_{n-1}))^{-1}\vert}\leq \left(\frac{m^{\frac{1}{2}}}{t^{\frac{4}{5}}}\right)^{2n},\quad\forall n\geq0.
\end{displaymath} 

Let $\sigma=t^{-\frac{4}{5}}$, $\widetilde{\sigma}=(Ct)^{-1}$, $\rho=\left(m^{\frac{1}{2}}t^{-\frac{4}{5}}\right)^2$ and $\widetilde{\rho}=(Cm^{\frac{1}{2}}t^{\frac{4}{5}})^{-2}$. Note that $\sigma,\widetilde{\sigma},\rho,\widetilde{\rho}\in(0,1)$. Furthermore, 
\begin{displaymath}
    \frac{\sigma\cdot\widetilde{\rho}}{\widetilde{\sigma}\cdot\rho}=\frac{1}{C^3}t^{-\frac{3}{5}}>t^{-\frac{4}{5}}=\sigma,
\end{displaymath}
for $t>0$ large enough. Therefore, since $f_t$ is a local diffeomorphism, by using \eqref{estifaderivadaendo} and the above estimates, we follow step by step the argument given in the proof of Lemma 3.7 and Proposition 3.11 in \cite{Oba2018} to obtain the $C^1$ curve $W_r^+(\overline{x})$, for every $\overline{x}\in Z_t$, satisfying the desired properties with $\theta_1=\theta_1(t)=t^{-\frac{2}{5}}$, for $t>0$ large enough. 
\end{proof} 

Now, let consider the set $Z_t$ given in the above lemma, and let $T=\left\lfloor\frac{1+7\delta_0}{28\delta_0}\right\rfloor$. By \eqref{estimatet} one has $T>20$. In this case, define 
\begin{displaymath}
    X_{\mu}=\bigcap_{j=0}^{T-1}f_t^{-j}(Z_t). 
\end{displaymath}
Since $\mu(Z_t)\geq \frac{1-7\delta}{1+7\delta_0}$, one has $\mu(\mathbb{T}^2\setminus Z_t)\leq \frac{14\delta_0}{1+7\delta_0}$. Hence, 
\begin{eqnarray*}
    \mu(X_{\mu})&=&1-\mu(\mathbb{T}^2\setminus X_{\mu}) \\
    &\geq&1-\sum_{j=0}^{T-1}\mu(f_t^{-j}(\mathbb{T}^2\setminus Z_t))\\
    &\geq&1-T\cdot\frac{14\delta_0}{1+7\delta_0}\\
    &\geq&\frac{1}{2}>0. 
\end{eqnarray*}

On the other hand, let $\theta_1$ as in Lemma \ref{Oba18}, and let $\theta_2=\theta_2(t)=t^{-\frac{3}{5}}$. Note that $\theta_2<\theta_1<1$. On the other hand, by definition of $\delta$ and $\mathcal{C}_{\delta}$, 
\begin{displaymath}
    d(\partial\mathcal{G}, \partial\mathcal{C}_{\frac{\delta}{2}})=t^{-\frac{3}{10}}>t^{-7}=r.  
\end{displaymath}
By the estimations given in \eqref{estimacriticalzone} we have the following properties for the set $Z_t$: 
\begin{enumerate}[(Z1)]
    \item $Z_t\subset\mathcal{G}\subset \mathbb{T}^2\setminus\mathcal{C}_{{\frac{\delta}{2}}}=\mathcal{G}'$. 
    \item If $\overline{x}\in\mathcal{G}'$, then
    \begin{displaymath}
        Df_t(\overline{x})\left(\Delta_{\frac{4}{\theta_1}}^h\right)\subset\Delta_{\theta_2}^h. 
    \end{displaymath}
    \item If $\overline{x}\in\mathcal{G}'$ and $u\in\Delta^h_{\theta_2}$, then $\Vert Df_t(\overline{x})u\Vert\geq t^{\frac{1}{2}}$. In particular,   for any $C^1$-curve $\gamma\subset\mathcal{G}'$ tangent to $\Delta_{\theta_2}^h$ satisfying $\ell_m(\gamma)\geq t^{-\frac{3}{10}}$, one has 
    \begin{displaymath}
        \ell_m(f_t(\gamma))>4. 
    \end{displaymath}
\end{enumerate}

\begin{lemma}\label{last}
For $\overline{x}\in X_{\mu}$, there are $n\in\mathbb{N}$ and a $C^1$ curve $\gamma_+\subset f_t^n(W_r^+(\overline{x}))$ tangent to $\Delta_{\theta_2}^h$ that it crosses   $\mathbb{T}^2$ horizontally. 
\end{lemma}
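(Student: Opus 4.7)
The strategy is to iterate $W_r^+(\overline{x})$ under $f_t$ using that $f_t^j(\overline{x})\in Z_t\subset\mathcal{G}$ for every $0\le j\le T-1$ (with $T>20$), until the iterate crosses $\mathbb{T}^2$ horizontally. The crucial geometric fact is that $d(\mathcal{G},\mathcal{C}_{\delta/2})\ge t^{-3/10}$, so any connected arc that contains a point of $\mathcal{G}$ and has Euclidean diameter strictly less than $t^{-3/10}$ lies entirely in $\mathcal{G}'$. Together with property (Z2) and the inclusion $\Delta_{\theta_2}^h\subset\Delta_{4/\theta_1}^h$ (which holds since $\theta_2<1<4/\theta_1$), this implies that the cone $\Delta_{\theta_2}^h$ is $Df_t$-invariant at every point of $\mathcal{G}'$.

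Set $\overline{x}_j=f_t^j(\overline{x})$ and $\gamma_j=f_t^j(W_r^+(\overline{x}))$. Since $\ell_m(\gamma_0)=r=t^{-7}\ll t^{-3/10}$ and $\overline{x}\in\mathcal{G}$, one has $\gamma_0\subset\mathcal{G}'$, so by (Z2) $\gamma_1$ is tangent to $\Delta_{\theta_2}^h$, with $\ell_m(\gamma_1)\ge c_0 t^{-8}$ from the minimum-norm bound $m(Df_t)\ge (Ct)^{-1}$. Proceeding by induction: as long as $\ell_m(\gamma_j)<t^{-3/10}/2$ and $j\le T-1$, the arc $\gamma_j$ contains $\overline{x}_j\in\mathcal{G}$ and has Euclidean diameter below $t^{-3/10}$, so $\gamma_j\subset\mathcal{G}'$ and remains tangent to $\Delta_{\theta_2}^h$. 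Property (Z3) then yields
\begin{displaymath}
\ell_m(\gamma_{j+1})\ge \frac{t^{1/2}}{\sqrt{2}}\,\ell_m(\gamma_j).
\end{displaymath}
Iterating gives $\ell_m(\gamma_k)\ge c'\,t^{-8+(k-1)/2}$, which for $t$ large enough exceeds $t^{-3/10}$ at some first time $j^*\le 17<T-1$.

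To finish, I would select a subarc $\gamma'\subset\gamma_{j^*}$ passing through $\overline{x}_{j^*}$ of $\ell_m$-length exactly $t^{-3/10}$, balanced around $\overline{x}_{j^*}$; this is possible because tangency to $\Delta_{\theta_2}^h$ with $\theta_2<1$ forces strict monotonicity of the first coordinate along $\gamma_{j^*}$, so the arc is parametrized by max-norm arclength on either side of $\overline{x}_{j^*}$. Every point of $\gamma'$ lies at Euclidean distance at most $t^{-3/10}/\sqrt{2}<t^{-3/10}$ from $\overline{x}_{j^*}\in\mathcal{G}$, so $\gamma'\subset\mathcal{G}'$. Property (Z3) applied to $\gamma'$ gives $\ell_m(f_t(\gamma'))>4$, and since $f_t(\gamma')$ is still tangent to $\Delta_{\theta_2}^h$, its horizontal projection equals $\ell_m(f_t(\gamma'))>4>1$. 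Therefore $\gamma_+:=f_t(\gamma')\subset f_t^{j^*+1}(W_r^+(\overline{x}))$ crosses $\mathbb{T}^2$ horizontally, and we take $n=j^*+1\le 18$.

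The main obstacle is the length bookkeeping: the first iterate only admits the weak bound $m(Df_t)\ge (Ct)^{-1}$, because the initial tangency is in $\Delta_{4/\theta_1}^h$ and not yet in $\Delta_{\theta_2}^h$, so only from the second iterate onward does one have the sharper $\sim t^{1/2}$ expansion of (Z3). The parameters $r=t^{-7}$, the neighborhood scale $t^{-3/10}$ (via $\delta$), and the horizon $T>20$ (via $\delta_0$) must fit together so that the arc reaches length $t^{-3/10}$ strictly before iteration $T-1$; the computation above leaves a comfortable margin and explains the choice of $r$ and $T$.
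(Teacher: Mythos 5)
Your argument is correct and follows essentially the same route as the paper's: push $W_r^+(\overline{x})$ forward, use that the first $T$ iterates of $\overline{x}$ stay in $\mathcal{G}$ together with the cone invariance and expansion properties (Z2)--(Z3) on $\mathcal{G}'$ to grow the curve until a subarc in $\mathcal{G}'$ reaches length of order $t^{-3/10}$, and then apply (Z3) once more to obtain $\ell_m(\gamma_+)>4$ and hence a horizontal crossing. The only differences are cosmetic (the paper stops at the first iterate where the curve exits $\mathcal{G}$ and extracts the long subarc from the collar $\mathcal{G}'\setminus\mathcal{G}$, whereas you track lengths directly and should harmonize your two thresholds $t^{-3/10}/2$ and $t^{-3/10}$, e.g.\ by invoking the pointwise bound $\Vert Df_t(\overline{y})u\Vert\geq t^{1/2}$ rather than the packaged form of (Z3)), and your handling of the weaker first-step bound via $m(Df_t)\geq (Ct)^{-1}$ is if anything more careful than the paper's.
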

\begin{proof}
    Since $\overline{x}\in X_{\mu}$, one has by property (Z1),
\begin{displaymath}
    \overline{x}, f_t(\overline{x}),\ldots, f_t^{T-1}(\overline{x})\in\mathcal{G}\subset \mathcal{G}'. 
\end{displaymath}
In this way, denote $W_0^+=W_r^+(\overline{x})$ and $W^+_k(\overline{x})=f_t^k(W_0^+)$ for $k\geq1$. Note that since $\overline{x}\in Z_t$ and $r<d(\partial\mathcal{G},\mathcal{G}')$ we have that $W_0^+\subset\mathcal{G}'$. Then, by property (Z2), $TW_1^+\subset\Delta_{\theta_2}^h$. Hence, since $\ell_m(W_1^+)>\ell_m(W_0^+)$ and $f_t(\overline{x})\in\mathcal{G}$, it follows that there is a connected component $\gamma_1$ of $W_1^+$ contained in $\mathcal{G}$ such that  $\overline{x}_1=f_t(\overline{x})\in\gamma_1$, it is tangent to $\Delta_{\theta_2}^h$ and its length is $t^{-7}$.

Now, let  
\begin{displaymath}
    p_0=\max\lbrace p\in\mathbb{N} : f_t^j(\gamma_1)\subset\mathcal{G},\;\forall j=1,\ldots, p\rbrace.
\end{displaymath}
By property (Z3), the definition of $\gamma_1$ and $X_{\mu}$ and the invariance of the cone $\Delta_{\theta_2}^h$ on $\mathcal{G}'$ we see that the curve $\gamma_j=f_t^j(\gamma_1)$, $j\geq 1$, satisfies $\ell_m(\gamma_j)\geq t^{\frac{1}{2}}\ell_m(\gamma_1)=t^{\frac{j-14}{2}}$, so that $p_0\leq 14$. 

    Let $p_0^+=p_0+1\leq 15<T-1$. Then, $f_t^{p_0^+}(\overline{x})\in\mathcal{G}\subset\mathcal{G}'$ and $\gamma_{p_0^+}\cap\partial\mathcal{G}'\neq\emptyset$. In particular, $\gamma_{p_0^+}$ also intersects $\partial\mathcal{G}$. Thus, there is a connected component $\hat{\gamma}_{p_0^+}$ of $\gamma_{p_0^+}$ contained in $\mathcal{G}'\setminus\mathcal{G}$ such that $\hat{\gamma}_{p_0^+}\cap\mathcal{G}\neq\emptyset$ and $\hat{\gamma}_{p_0^+}\cap\mathcal{G}'\neq\emptyset$, which implies that its length is at least $t^{-3/10}$, so that, by property (Z3), the $C^1$ curve $\gamma_+=f_t(\hat{\gamma}_{p_0^+})$ satisfies  $\ell_m(\gamma_+)>4$. Moreover, $\gamma_+\subset W_{p_0+3}^+(\overline{x})$ and $T\gamma_+\subset\Delta_{\theta_2}^h$. 

    Finally, if $\gamma_+(t)=(\gamma_1(t),\gamma_2(t))$, $t\in I$, we have that $\vert\gamma_1'(t)\vert\geq\theta_2\vert\gamma_1'(t)\vert\geq\vert\gamma_2'(t)\vert$ for every $t\in I$, so that $\Vert \gamma'_+\Vert=\vert\gamma_1'(t)\vert$. Therefore, the projection of $\gamma^+$ to the horizontal axis, denoted by $\pi_{h}(\gamma^+)$, satisfies 
    \begin{displaymath}
        \ell(\pi_{h}(\gamma^+))=\int_{I}\vert\gamma_1'(t)\vert dt=\int_{I}\Vert\gamma_+'(t)\Vert dt=\ell_m(\gamma^+)>1,
    \end{displaymath}
    which proves the result. 
\end{proof}

Finally, let $\hat{\Lambda}_0\subset\hat{\mathcal{R}}$ be as defined in the proof of Lemma \ref{Oba18}. As in \cite{ACS}, let $\hat{\Lambda}_1\subset\mathcal{R}$ be the set of points such that for every $\hat{x}\in\hat{\Lambda}_1$, there is a full Lebesgue measure subset $B$ of $\hat{W}^u(\hat{x})$ such that $B\subset\hat{\Lambda}_0$. By Lemma \ref{conabsunstable} we have that $\hat{\mu}(\hat{\Lambda}_1)=1$. Moreover, by definition of $\hat{\Lambda}_0$, there is an ergodic component $\hat{\mu}_0$ of $\hat{\mu}$ such that $B\subset\mathcal{B}(\hat{\mu}_0)$. In this case, we define $\Lambda_j=\pi_{ext}(\hat{\Lambda}_j)$, $j=0,1$. Notice that these sets are forward invariant and have full Lebesgue measure. Furthermore, if $\overline{x}\in\Lambda_1$, there are $\hat{x}\in\hat{\Lambda}_1$ and an ergodic component $\hat{\mu}_0$ of $\hat{\mu}$ such that $\pi_{ext}(\hat{x})=\overline{x}$ and Lebesgue almost every point in $W^u(\hat{x})$ belongs to $\mathcal{B}(\mu_0)$, where $\mu_0=(\pi_{ext})_*\hat{\mu}_0$.   

In \cite{ACS} the authors introduce the notion of $\mu_0$-regular $su$-rectangle for a endomorphism of class $C^2$ as a piecewise smooth simple closed curve in $\mathbb{T}^2$, consisting of two pieces of local stable manifolds and two pieces of local unstable manifolds such that the last two pieces are contained in $\Lambda_1$, and almost every point in these pieces belongs to the basin of $\mu_0$. In the aforementioned reference, they shown that when the Lebesgue measure is hyperbolic and almost every stable manifold has large diameter, these rectangles are open modulo zero subsets of $\mathbb{T}^2$. 

\subsection{Stable ergodicity} 

Next, we are ready to prove Theorem B. 

\begin{proof}[proof of Theorem \ref{main2}]
First, notice that in the proof of non-uniform hyperbolicity presented in \cite{ACS} the following estimate for $C_{\chi}(f_t)$ is obtained in our setting:  
\begin{displaymath}
    C_{\chi}(f_t)\geq \left(\frac{\lfloor\frac{m-1}{2}\rfloor-1}{\lfloor\frac{m-1}{2}\rfloor+1}\right)\log t+C_0,\quad C_0\in\mathbb{R}. 
\end{displaymath}
Let $\varepsilon>0$ small enough, and let $t>0$ and $m\geq m_0$ satisfying 
\begin{equation}\label{m0}
\left(\frac{\lfloor\frac{m-1}{2}\rfloor-1}{\lfloor\frac{m-1}{2}\rfloor+1}\right)\log t+C_0-\varepsilon>\left(\frac{\lfloor\frac{m_0-1}{2}\rfloor-1}{\lfloor\frac{m_0-1}{2}\rfloor+1}\right)\log t.   
\end{equation}
In this case, take $f_t=E\circ h_t\in\mathcal{U}$, where $E$ has elementary divisors $(1,m)$. Consider the following properties satisfied by $Dh_t$ and $f_t$: 
\begin{enumerate}[(1)]
    \item For any $\overline{x}\in\overline{\mathcal{G}}$ we have 
    \begin{itemize}
    \item $(Dh_t(\overline{x}))^{-1}(\Delta_{\alpha}^h)\subset\overline{(Dh_t(\overline{x}))^{-1}(\Delta_{\alpha}^h)}\subset\Delta_{\alpha}^v$,
    \item $Dh_t(\overline{x})(\Delta_{\alpha}^v)\subset\overline{Dh_t(\overline{x})(\Delta_{\alpha}^v)}\subset\Delta_{\alpha}^h$,
    \item $m((Dh_t(\overline{x}))^{-1}),m(Dh_t(\overline{x}))>\frac{at-\alpha}{\alpha}$.
    \end{itemize}
    \item For any $(\overline{x},u)\in T^1\mathbb{T}^2$, there are $\overline{y}\in\mathcal{G}$ and $w\in\Delta_{\alpha}^v$ such that $f_t(\overline{y})=\overline{x}$ and $Df_t(\overline{y})w=u$.
    \item For any $\overline{x}\in\mathbb{T}^2$, there is a pre-image $\overline{y}\in\mathcal{G}$ with $d(\overline{y},\mathcal{C})>\frac{1}{8(m+1)}$. 
\end{enumerate}
In this way, define 
\begin{displaymath}
    \mathcal{V}'=\lbrace g=E\circ h : h\in \text{Diff}^2(\mathbb{T}^2)\text{ is }C^2\text{-close to }h_t\text{ and satisfies (1)}\rbrace, 
\end{displaymath}
and we choose $\mathcal{V}\subset\mathcal{V}'$ such that every $g\in\mathcal{V}$ satisfies properties (2) and (3) above. Observe that $\mathcal{V}$ is a $C^2$ open subset of $End^2(\mathbb{T}^2)$ containing $f_t$. Take $g\in \mathcal{W}:=\mathcal{U}\cap\mathcal{V}$. Then, by the choice of $\alpha>1$, the definition of $\mathcal{W}$ and \cite[Lemma 6.2]{ACS}, one has that $g$ is an area-preserving NUH endomorphism such that for any $\overline{x}\in\mathbb{T}^2$ and every $C^1$ curve $\gamma_x$ passing through $\overline{x}$, there is $N'\in\mathbb{N}$, a pre-image $\overline{y}\in\mathbb{T}^2$ and a $C^1$ curve $\gamma'_x$ passing through $\overline{y}$ which crosses $\mathbb{T}^2$ vertically such that $g^{N'}(\gamma'_x)=\gamma_x$. In particular, from \cite[Lemma 6.3]{ACS}, it follows that $W_g^s(\overline{x})$ contains a $v$-segment for almost every point $\overline{x}\in\mathbb{T}^2$.

On the other hand, by shrinking $\mathcal{W}$ if it is necessary, similar estimations to \eqref{estifaderivadaendo} and properties (Z1)-(Z3) are obtained for any $g\in\mathcal{W}$. Moreover, we have by definition of $C_{\chi}(f_t)$, $m_0$ and relation \eqref{m0} that 
\begin{displaymath}
 C_{\chi}(g)\geq \left(\frac{\lfloor\frac{m-1}{2}\rfloor-1}{\lfloor\frac{m-1}{2}\rfloor+1}\right)\log t+C-\varepsilon>\left(\frac{\lfloor\frac{m_0-1}{2}\rfloor-1}{\lfloor\frac{m_0-1}{2}\rfloor+1}\right)\log t>(1-\delta_0)\log t,
\end{displaymath}
so that, by \cite[Proposition 2.2]{ACS}, $\min\lbrace\chi_{g}^+(\overline{x}),-\chi_{g}^-(\overline{x})\rbrace>(1-\delta_0)\log t$ for Lebesgue almost every point $\overline{x}\in\mathbb{T}^2$. Therefore, Lemma \ref{conabsstable}, Lemma \ref{conabsunstable}, Lemma \ref{Oba18} and Lemma \ref{last} hold for every $g\in\mathcal{W}$. 

Define the sets $X_g$ and  $\Lambda_1^g$ in an analogous way to that given for $f_t$.  Assume that $\mu$ has at least two different ergodic components $\mu_0$ and $\mu_1$. On one hand, since $\Lambda_1^g$ has full Lebesgue measure and $X_g$ has positive Lebesgue measure, there is $\overline{x}\in X_g\cap\Lambda_1^g$. Suppose that $\overline{x}\in\mathcal{B}(\mu_0)$. Then, by Lemma \ref{Oba18} and Lemma \ref{last} there are a $C^1$-curve $\gamma_x$ containing $\overline{x}$ and a natural number $n_0<20$ such that $\gamma^{n_0}=g^{n_0}(\gamma_x)$ crosses $\mathbb{T}^2$ horizontally. Moreover, by the topological characterization of the Pesin unstable mannifold, we have $W_r^+(\overline{x})\subset W^u(\hat{x})$, where $\hat{x}\in\Lambda_1^g$ satisfies $\pi_{ext}(\hat{x})=\overline{x}$, so that there is a full Lebesgue measure subset $B$ of $\mathcal{B}(\mu_0)$ contained in $\gamma^{n_0}$.
On the other hand, let $R$ be a $\mu_1$-regular $su$-rectangle, and take $\overline{y}\in int(R)\cap\mathcal{B}(\mu_1)$. Then, there is a small $C^1$-curve $\gamma_y\subset R$ containing $\overline{y}$. Hence, there exists $N\in\mathbb{N}$ and a $C^1$-curve $\gamma_y'$ crossing $\mathbb{T}^2$ vertically such that $g^{N}(\gamma_y')=\gamma_y$. The Figure 1 helps visualize the argument up to this point.  
\begin{figure}[ht]
\includegraphics[scale=0.3]{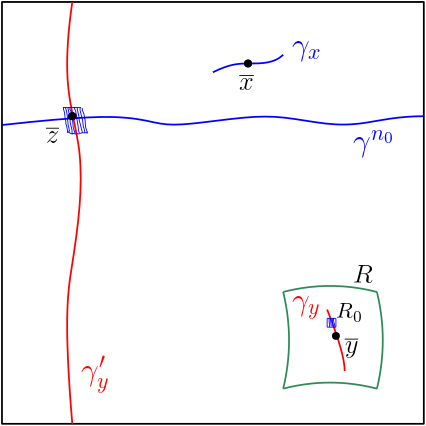}
\caption{Proof of Theorem B.}
\end{figure}

Therefore, as $\gamma^{n_0}$ and $\gamma_y'$ cross $\mathbb{T}^2$ horizontally and vertically,  respectively, there is a point $\overline{z}\in\gamma^{n_0}\cap\gamma_y'$. Hence, since $g$ is a local diffeomorphism, we can conclude by the absolute continuity of the stable lamination (Lemma \ref{conabsstable}) that there is a positive Lebesgue measure subset $R_0\subset B(\mu_0)\cap R$, which is impossible. Thus, $\mu$ is ergodic for every $g\in\mathcal{W}$, which proves the result. 
\end{proof}

\subsection*{Acknowledgments}
We would like to thank R. Saghin and the anonymous referees for their valuable comments and suggestions.

\end{document}